\documentclass[reqno, a4paper,11pt]{amsart}
\usepackage[english]{babel}
\usepackage[utf8]{inputenc}

\usepackage{color,bm}%refcheck}
\usepackage{enumerate}
\usepackage{amsthm,amssymb,mathrsfs}
\usepackage{amsmath}
\usepackage{graphicx,epsfig}
\usepackage{hyperref}
\usepackage{amsfonts, mathtools, subfig, makebox, dsfont}
\usepackage{comment}
%\usepackage{dsfonts}

%\mathtoolsset{showonlyrefs}
\hypersetup{hidelinks}

\parindent1em
\textheight 8.8in \textwidth 5.8in \voffset -0.15in \hoffset -0.4in

\allowdisplaybreaks[1]

\def\C{{\mathbb{C}}}

\def\R{{\mathbb{R}}}

\def\N{\mathbb N}

\def\K{\mathbb K}

\newtheorem{theo}{Theorem}[section]
\newtheorem{lemma}[theo]{Lemma}
\newtheorem{prop}[theo]{Proposition}
\newtheorem{cor}[theo]{Corollary}

\newtheorem{hyps}[theo]{Hypotheses}
\theoremstyle{definition}

\newtheorem{rem}[theo]{Remark}

\def\f{\bm{f}}
\def\uu{\bm{u}}

%segno

\begin{document}
\numberwithin{equation}{section}
\title[Vectorial $\textup{H}^\infty$-calculus for generators]{Bounded $\textup{H}^\infty$-calculus for vectorial-valued operators with Gaussian kernel estimates}
\author[D. Addona]{Davide Addona}
\author[V. Leone]{Vincenzo Leone}
\author[L. Lorenzi]{Luca Lorenzi} 
\author[A. Rhandi]{Abdelaziz Rhandi}

%\footnote{Corresponding author.}

\address{D.Addona, L.Lorenzi: Plesso di Matematica, Dipartimento di Scienze Matematiche, Fisiche e Informatiche, Università di Parma, Parco Area delle Scienze 53/A, 43124 Parma, Italy}
\email{davide.addona@unipr.it, luca.lorenzi@unipr.it}

\address{V.Leone, A.Rhandi: Dipartimento di Matematica, Università degli Studi di Salerno, Via Giovanni Paolo II, 132, 84084 Fisciano (SA), Italy}
\email{vleone@unisa.it, arhandi@unisa.it}

%\thanks{}
\vskip 0.5cm
\keywords{Bounded $\textup{H}^\infty$-calculus, Vector-valued elliptic operators, Vector-valued analytic semigroups, Complex Gaussian kernel estimates}
 
\subjclass[2020]{Primary: 47A60;  Secondary: 35J47, 35K08, 47D06}
	
\begin{abstract}
We prove that the vector-valued generator of a bounded holomorphic semigroup represented by a kernel satisfying  Gaussian estimates with bounded $\textup{H}^\infty$-calculus in $L^2(\R^d;\C^m)$  admits bounded $\textup{H}^\infty$-calculus for every $p\in (1,\infty)$.
We apply this result to the elliptic operator $-\operatorname{div}(Q\nabla)+V$, where the potential term V is a matrix-valued function whose entries belong to $L^1_{\rm loc}(\R^d)$ and, for almost every $x\in \R^d$, $V(x)$ is a symmetric and nonnegative definite matrix.
\end{abstract}
	
\maketitle

\section{Introduction}
The $\textup{H}^\infty$-calculus for sectorial operators, a concept introduced by McIntosh \cite{M86}, is a powerful functional calculus that extends the standard holomorphic functional calculus to a broader class of unbounded operators, cf. \cite{HNVW17} and the references therein. It plays a crucial role in evolution equations, control theory, and harmonic analysis. More precisely, it provides a powerful framework to establish maximal regularity for sectorial operators, which ensures well-posedness and stability of solutions. It is used in robust control problems where sectorial operators describe stability properties of dynamical systems and in harmonic analysis, which is connected to Riesz transforms and spectral multipliers.  It also allows to establish off-diagonal decay estimates and $L^p$-boundedness of spectral multipliers (see \cite{CDMY96}, \cite{Kriegler-thesis} and the references therein). 

The generator $-L$ of a bounded holomorphic semigroup is a sectorial operator and, in many situations, it becomes crucial to determine whether, for a given holomorphic function $\xi$, the operator $\xi(L)$ is defined and bounded. 
In Hilbert spaces, this is indeed the case, as a consequence of maximal accretivity and von Neumann’s inequality; see \cite[Chapter 7]{Haase06} for further details. Beyond the Hilbert space setting, well-known examples of generators admitting a bounded  $\textup{H}^\infty$-calculus include generators of $C_0$-groups on $L^p$-spaces  as well as negative generators of positive, strongly continuous semigroups on $L^p$-spaces. These results follow from the transference principle of Coifman and Weiss \cite{CoWe77}. In particular, this implies the boundedness of the imaginary powers of such operators, which has several applications, for instance in complex interpolation theory and in establishing bounds for the Riesz transform. 

Another strategy was developed by Duong and Robinson \cite{DuRo96}, who considered operators with an $L^2$-bounded $\textup{H}^\infty$-calculus that generate a bounded holomorphic semigroup represented by a kernel satisfying a Poisson-type bound for complex time, a condition that includes the typical Gaussian bounds associated with elliptic operators. They showed that such operators admit a $L^p$-bounded $\textup{H}^\infty$-calculus as well. In this framework, we also cite the work \cite{heb}, which proved a multiplier theorem for Schr\"odinger operators.

The situation is quite different for vector-valued operators. In fact, while in Hilbert spaces the abstract theory allows us to conclude that maximal accretive operators in $L^2(\R^d;\C^m)$ have a bounded $\textup{H}^\infty$-calculus, the validity of the transference principle for the generator of a positive semigroup in $\R^m$-valued $L^p$-spaces, or more in general $Y$-valued $L^p$-spaces (when $Y$ is a Banach space), is only known for semigroups of the form $T {\rm Id}_Y$, where $T$ is a positive contraction semigroup on scalar $L^p$-spaces, see e.g. \cite{HP98}.

In this paper, we extend the aforementioned strategy of \cite{DuRo96} to a broader class of vector-valued operators and establish the boundedness of the $\textup{H}^\infty$-calculus in $\C^m$-valued $L^p$-spaces. Specifically, we show that the combination of a Gaussian-type kernel estimates and a Calder\'on-Zygmund decomposition for scalar functions yields that the operator $\xi(L)$ is well-defined and weakly bounded in $L^1(\R^d;\C^m)$ when $\xi$ is a bounded and holomorphic function with polynomial decay at both zero and infinity. This decay condition can be removed through a density argument. Finally, the result follows from the  Marcinkiewicz interpolation theorem. We conclude with an application of this theory to a vector-valued Schr\"odinger type operator.

\section{Notation, basic definitions and preliminary results}

\subsection{Notation} 
Given a set $E\subset\R^d$, we denote by $\chi_E$ its characteristic function.
We find it convenient to display vector-valued functions in bold style. If $\f$ is any such function, with values in $\R^m$ (resp. $\C^m$), then we denote by $f_1,\ldots,f_m$ its components.

We denote by $C_c^{\infty}(\R^d;\mathbb{K}^m)$ ($d,m\in\N)$ the space of the compactly supported and infinitely many times differentiable functions $\f:\R^d\to\mathbb{K}^m$ (here, $\K$ is either $\R$ or $\C$) and, for $p\in[1,\infty)\cup\{\infty\}$,  $L^p(\R^d;\mathbb{K}^m) $ denotes the space of (the equivalent classes of) measurable $\mathbb K^m$-valued functions, such that
$\|\f\|_p= \left(\int_{\R^d}\|\f(x)\|^pdx\right)^{\frac1p}$
 is finite, if $1 \leq p <\infty$, and such that
$\|\f\|_{\infty}= {\rm ess\sup}_{x\in\R^d}\|\f(x)\|<\infty$, if $p=\infty.$ In particular, we denote with %$L^p_c(\R^d;\R^m)$ the subspace of $L^p(\R^d;\R^m)$ of compactly supported functions on $\R^d$ and with 
$L^p_{\rm loc}(\R^d;\mathbb{K}^m)$ the set of (equivalent classes of) measurable functions that belong $L^p(\Omega,\mathbb{K}^m)$ for every bounded measurable subset $\Omega$ of $\R^d$.

The Sobolev space $H^1(\R^d;\mathbb{K}^m)$ is the set of functions $\f\in L^2(\R^d;\mathbb{K}^m)$ such that the first-order distributional derivatives belong to $L^2(\R^d;\mathbb{K}^m)$ for any multi-index $\beta$ with length $1$. It is endowed with its natural norm.

For $\theta \in (0, \pi)$, $\Sigma_{\theta}$ denotes the open sector of angle $\theta$, centered at the origin, i.e., $\Sigma_\theta \coloneqq \{ \lambda \in \C \setminus \{0\}\colon |\arg(\lambda) | < \theta \}$, and $\Sigma_0\coloneqq(0,\infty)$. 
For a bounded function  $\xi\colon \Sigma_\theta \to \mathbb{C}$, we denote by $\| \xi \|_{\infty, \theta}$ its sup-norm. Furthermore, we denote by $\textup{H}(\Sigma_\theta)$ the space of all holomorphic functions on $\Sigma_\theta$.
We also introduce the spaces
$\textup{H}^\infty(\Sigma_\theta)\coloneqq\textup{H}(\Sigma_\theta)\cap C_b(\Sigma_{\theta})$ and
\begin{equation*}
\Psi(\Sigma_\theta)\coloneqq\left\{\xi\in \textup{H}^\infty(\Sigma_\theta)\colon\exists\ c,s>0\colon |\xi(z)|\leq c\frac{|z|^s}{1+|z|^{2s}} \ \forall z\in \Sigma_\theta\right\}. 
\end{equation*}
% \textup{H}^\infty(\Sigma_\theta) &\coloneqq \{f\in \textup{H}(\Sigma_\theta)\colon\exists\ c,s>0: \| f \|_{\infty, \theta}<\infty\}\\

The open ball in $\R^d$ centered at $x$ with radius $r$ is denoted by $B(x,r)$ and $|B(x,r)|$ stands for its Lebesgue measure. When $x=0$, we simply write $B(r)$ instead of $B(0,r)$. %For every measurable bounded subset $A\subseteq \R^d$ with positive measure and every $f\in L^1_{\rm loc}(\R^d;\K)$, we set $-\hskip -3mm\int_Af(x)dx:=\frac{1}{|A|}\int_Af(x)dx$. 

The space of all bounded linear operators from a Banach space $X$ into another Banach space $Y$ is denoted by $\mathscr{L}(X,Y)$. If $Y=X$ then we simply write $\mathscr L(X)$. Finally, by $\bm{e}_k$ we denote the  $k$-th element of the canonical basis of $\K^\ell$, $\ell=d$ or $\ell=m$, and, by $d(E,F)$ we denote the distance between the sets $E, F\subset\R^d$. 

\subsection{Basic definitions} 
A linear operator $L$ on a Banach space $X$ is called \emph{sectorial of angle} $\theta \in [0, \pi)$ if its spectrum $\sigma(L)$ is contained in the closure of $\Sigma_\theta$ and there exists a constant $C_\mu > 0$ such that, for all $\mu \in (\theta, \pi)$, it holds that
\begin{equation*}
\| R(\lambda, L) \|_{{\mathscr L}(X)} \leq \frac{C_\mu}{|\lambda|}, \qquad\;\,\lambda\in\mathbb{C}, \ |\arg(\lambda)|>\mu. 
\end{equation*}		

For $\xi\in\Psi(\Sigma_\mu)$, we define the operator $\xi(L)\in{\mathscr L}(X)$ by setting
\begin{equation}
\label{xi(L)}
\xi(L) \coloneqq \frac1{2\pi i} \int_{\gamma_\varphi} \xi(\lambda)R(\lambda,L)d\lambda,
\end{equation}
where  $\gamma_\varphi$ is the boundary of $\Sigma_\varphi$ oriented counterclockwise and $\varphi\in (\theta,\mu)$. 
        
The sectoriality of $L$ and the properties of the functions in the class $\Psi(\Sigma_\mu)$ ensure that the integral in the right-hand side of \eqref{xi(L)} is absolutely convergent in the operator norm topology and it is independent of the choice of $\varphi$.

Furthermore, if $L$ is sectorial and injective, it is possible to define $\xi(L)$ for every function $\xi\in\textup{H}^\infty(\Sigma_\mu)$ and the new definition agrees with the above one when $\xi\in \Psi(\Sigma_{\mu})$. Note that, if $\xi\in \textup{H}^{\infty}(\Sigma_{\mu})$, then, in general, $\xi(L)$ is not bounded. We say that $L$ admits a bounded $\textup{H}^\infty$-calculus of angle $ \mu \in  (\theta, \pi) $ if there exists a constant $ C_\mu>0$ such that for all $ \xi \in \textup{H}^\infty(\Sigma_\mu)$ $\xi(L)$ belongs to ${\mathscr L}(X)$, with the norm bound
\begin{equation*}
\| \xi(L) \|_{{\mathscr L}(X)}\le C_\mu\|\xi\|_{\infty, \mu}. 
\end{equation*}
For a detailed discussion about this construction, we refer to \cite[Chapter 5]{Haase06}. 
In particular, if $X$ is a Hilbert space and $L$ is sectorial, self-adjoint and injective, then it admits a bounded $\textup{H}^{\infty}$-calculus of any angle %$\pi$ and, for every 
$\mu\in(0,\pi)$ and for every $\xi\in \textup{H}^\infty(\Sigma_\mu)$, we can define the operator $\xi(H)$ by the Dunford calculus (see \cite[Corollary 7.1.6]{Haase06} and \cite[Subsection 10.2.a]{HNVW17}).

There is a one-to-one correspondence between sectorial operators and bounded holomorphic semigroups. In particular, if $L$ is sectorial of angle $\theta\in\left[0,\frac\pi2\right)$, then the family $(e^{-zL})_{z\in \Sigma_{\frac\pi2-\theta}}$ is a bounded holomorphic semigroup. On the other hand, if $(T(z))_{z\in \Sigma_\theta}$ is a bounded holomorphic semigroup for some $\theta\in\left(0,\frac\pi2\right]$, then the unique operator $A$ such that 
%, whose resolvent coincides with the operator 
$(\lambda+A)^{-1}=\int_0^\infty e^{-\lambda t}T(t)dt$ for every $\lambda\in\C$ with positive real part, is sectorial of angle $\frac\pi2-\theta$ and $T(z) = e^{-zA}$ for every $z\in \Sigma_\theta$ (see \cite[Proposition 3.4.4]{Haase06}).

The Hardy-Littlewood maximal 
function $Mf$ is defined 
on scalar functions $f\in L_{\rm loc}^1(\R^d)$ by
\begin{equation*}
(Mf)(x)\coloneqq\sup_{r>0}\frac{1}{|B(x,r)|}\int_{B(x,r)}|f(y)|dy,\qquad\;\, x\in\R^d.
\end{equation*}

For every $t>0$ and $x\in\R^d$, we define the Gaussian kernel $w$ as 
\begin{equation}\label{eq:def:gaussian:fun}
w(t,x)\coloneqq c_1t^{-\frac d2}\exp\bigg (-c_2\frac{|x|^2}{t}\bigg ),
\end{equation}
where $c_1,c_2$ are positive constants. 

\subsection{Preliminary results} To prove our main results, we need some technical estimates which will be exploited during the proof of Theorem \ref{thm:Hinfinity_calculus_Lp:gen}. For a detailed proof of these results (also for more general kernels $w$), we refer to \cite[Propositions 2.1, 2.4 \& 2.5]{DuRo96}. %We notice that, from estimate \eqref{stime_nucleo}, we can choose any $\delta>0$ and $m=2$ in the statements of \cite{DuRo96}.
\begin{lemma}
\label{lemma:kernel_est} 
The function $w$ introduced in \eqref{eq:def:gaussian:fun} satisfies the following properties.
\begin{enumerate}[\rm(i)]
\item For every $\gamma>0$ there exists a positive constant $c$ such that
\begin{align*}
\int_{\R^d\setminus B(r)}w(t,y)dy\leq \frac{c}{(1+r^2t^{-1})^{\gamma}}, \qquad\;\,t>0,\;\, r\geq 0.    
\end{align*}
\item For every $p\in(1,\infty)$ and $t>0$, there exists a positive constant $c$ such that
\begin{align*}
%\label{stima_con_maximal_function}
\int_{\R^d}w(t,x-y)\|\bm{u}(y)\|dy\leq cM(\|\bm{u}\|)(x), \qquad \textrm{for a.e. }x\in \R^d, \ {\bm{u}}\in L^p(\R^d;\C^m).
\end{align*}
%, where $Mf$ is Hardy-Littlewood maximal function.
\item For every $v>0$, there exist $c,\tau\geq 1$ such that
\begin{align*}
\sup_{z\in B(y,r)}w(t,x-z)
\leq c\inf_{z\in B(y,r)}w(\tau t,x-z),
\end{align*}
uniformly with respect to $x,y\in \R^d$ and to $r,t>0$ with $r\le\sqrt{vt}$.
\end{enumerate}
\end{lemma}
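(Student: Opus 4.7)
The plan is to prove the three parts in turn, each by direct estimates on the explicit Gaussian kernel $w(t,x)=c_1 t^{-d/2}\exp(-c_2|x|^2/t)$. Parts (i) and (ii) are standard Gaussian tail and maximal-function bounds; the only subtlety lies in (iii), where the dilation factor $\tau$ has to be chosen large enough so that super-Gaussian decay from $w(\tau t,\cdot)$ absorbs a cross term coming from the variation of $|x-z|$ over $B(y,r)$.

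For (i), I would rescale $y=\sqrt{t}\,z$ to reduce the claim to
\begin{equation*}
\int_{\{|z|>\rho\}} e^{-c_2|z|^2}\,dz\leq c(1+\rho^2)^{-\gamma},\qquad \rho\coloneqq r/\sqrt{t}.
\end{equation*}
For $\rho\leq 1$ the left-hand side is dominated by the total Gaussian integral; for $\rho>1$ one uses $1\leq\rho^{-2\gamma}|z|^{2\gamma}$ on $\{|z|>\rho\}$ together with the finiteness of $\int_{\R^d}|z|^{2\gamma}e^{-c_2|z|^2}\,dz$ to obtain a $\rho^{-2\gamma}$ bound. Both regimes combine into the claim after absorbing constants.

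For (ii), I would dyadically decompose $\R^d$ around $x$: the central ball $B(x,\sqrt{t})$ plus the annuli $A_k=B(x,2^{k+1}\sqrt{t})\setminus B(x,2^k\sqrt{t})$ for $k\geq 0$. On each $A_k$ one has $w(t,x-y)\leq c_1 t^{-d/2}e^{-c_2 4^k}$, and the average of $\|\uu\|$ over $B(x,2^{k+1}\sqrt{t})$ is at most $M(\|\uu\|)(x)$ by definition of the maximal function. Thus each annular contribution is bounded by $C\,2^{d(k+1)}e^{-c_2 4^k}M(\|\uu\|)(x)$, and the series in $k$ converges thanks to the super-exponential factor; the central ball is handled analogously using only $w(t,\cdot)\leq c_1 t^{-d/2}$.

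For (iii), writing $\alpha\coloneqq\inf_{z\in B(y,r)}|x-z|$ and $\beta\coloneqq\sup_{z\in B(y,r)}|x-z|$, the ratio between the left- and right-hand sides equals
\begin{equation*}
\tau^{d/2}\exp\!\left(c_2\left[\frac{\beta^2}{\tau t}-\frac{\alpha^2}{t}\right]\right),
\end{equation*}
so it suffices to bound the exponent by a constant depending only on $v$. When $|x-y|\leq r$ one has $\alpha\geq 0$ and $\beta\leq 2r\leq 2\sqrt{vt}$, hence $\beta^2/(\tau t)\leq 4v/\tau$, which already suffices. When $|x-y|>r$ one has $\beta=|x-y|+r$ and $\alpha=|x-y|-r$, and a one-variable maximization of $\beta^2/\tau-\alpha^2$ in $u=|x-y|>r$ yields the bound $4r^2/(\tau-1)\leq 4vt/(\tau-1)$. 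Choosing, say, $\tau=2$ delivers the claim. The main obstacle is precisely this second case: the cross term $4r|x-y|/t$ in the exponent grows linearly in $|x-y|$, and only the strictly negative coefficient $-(1-1/\tau)$ in front of $\alpha^2/t$ (produced by choosing $\tau>1$) allows the negative quadratic to dominate it uniformly in $x,y$.
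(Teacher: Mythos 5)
Your proof is correct on all three parts. The paper itself does not give a proof of this lemma; it only cites Duong--Robinson \cite{DuRo96}, Propositions 2.1, 2.4 and 2.5, which establish these bounds for a general class of kernels satisfying abstract decay and ``doubling''-type conditions. Your approach is a direct, elementary verification exploiting the explicit Gaussian form $w(t,x)=c_1 t^{-d/2}e^{-c_2|x|^2/t}$, which is cleaner and entirely appropriate here since the lemma specializes $w$ to the Gaussian anyway. Each step checks out: the rescaling plus the trick $1\le(|z|/\rho)^{2\gamma}$ in (i); the dyadic annular decomposition with $|B(x,2^{k+1}\sqrt t)|\sim 2^{dk}t^{d/2}$ and the summable factor $2^{dk}e^{-c_2 4^k}$ in (ii); and in (iii) the reduction to bounding $\beta^2/(\tau t)-\alpha^2/t$ with $\alpha=\max(0,|x-y|-r)$, $\beta=|x-y|+r$, where in the region $|x-y|>r$ the downward parabola in $u=|x-y|$ indeed attains its maximum $4r^2/(\tau-1)\le 4vt/(\tau-1)$ at $u_*=r(\tau+1)/(\tau-1)>r$, so $\tau=2$ gives a constant $2^{d/2}e^{4c_2 v}$. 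The identification of the ``cross term'' as the reason one needs $\tau>1$ is the right intuition.
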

%\begin{proof}
%These properties have been proved in \cite[Propositions 2.1, 2.4, 2.5]{DuRo96}.
%\end{proof}

%(ii): let $\bm{u}\in C^\infty_c(\R^d;\R^m)$. \textcolor{blue}{By \eqref{domin-kernel}},
%From $\|e^{-tH}{\bm{u}}\|\leq e^{t\Delta}\|{\bm{u}}\|$, 
%we get the first inequality in \eqref{stima_con_maximal_function}. The second inequality follows from \cite[Proposition 2.4]{DuRo96}.

\begin{lemma}
\label{lemma:approssimazione}
Let $S\colon\mathcal D\to \mathcal M(\R^d;\C^m)$ be a linear operator, where $\mathcal D$ is a dense subspace of $L^1(\R^d;\C^m)$ and $\mathcal M(\R^d;\C^m)$ is the space of measurable functions from $\R^d$ into $\C^m$. Assume that there exists a positive constant $c$, independent of $\alpha$, such that
\begin{align}
\label{weak_11_dense}
|\{x\in \R^d:\|(S\bm{u})(x)\|>\alpha\}|
\leq \frac{c}{\alpha}\|\bm{u}\|_1, \qquad\;\, \alpha>0,
\end{align}
for every $\bm{u}\in \mathcal D$. Then, $S$ uniquely extends to an operator defined on $L^1(\R^d;\C^m)$, which we still denote by $S$, and \eqref{weak_11_dense} is satisfied by every $\bm{u}\in L^1(\R^d;\C^m)$.
\end{lemma}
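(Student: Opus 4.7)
The plan is to define the extension of $S$ to $L^1(\R^d;\C^m)$ by completing $S$ in the topology of convergence in measure on $\R^d$, and then to propagate the weak-type bound \eqref{weak_11_dense} through the limit procedure by a splitting argument. The codomain is only measurable functions, not a Banach space, so the standard bounded-linear-extension principle does not apply verbatim; the whole point is to show that convergence in measure is a good enough substitute here.

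First, given $\bm{u}\in L^1(\R^d;\C^m)$, I would pick $\bm{u}_n\in\mathcal{D}$ with $\bm{u}_n\to\bm{u}$ in $L^1$ (using density of $\mathcal{D}$). Linearity of $S$ and the hypothesis \eqref{weak_11_dense} applied to $\bm{u}_n-\bm{u}_m\in\mathcal{D}$ give, for every $\alpha>0$,
\[
|\{x\in\R^d:\|(S\bm{u}_n)(x)-(S\bm{u}_m)(x)\|>\alpha\}|\le\frac{c}{\alpha}\|\bm{u}_n-\bm{u}_m\|_1,
\]
so $(S\bm{u}_n)$ is Cauchy in measure. By completeness of the space of measurable functions on the $\sigma$-finite space $\R^d$ with respect to convergence in measure (proved in the standard way by extracting a subsequence converging almost everywhere via Borel--Cantelli), there exists a measurable limit, which I would define to be $S\bm{u}$. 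Independence of the choice of approximating sequence is obtained by interleaving any two admissible sequences $\bm{u}_n,\bm{v}_n\in\mathcal{D}$ into a single $L^1$-convergent sequence and invoking the same Cauchy-in-measure estimate; linearity of the extension is then immediate by approximating linear combinations.

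Next, to propagate \eqref{weak_11_dense} from $\mathcal{D}$ to all of $L^1(\R^d;\C^m)$, I would fix $\bm{u}\in L^1(\R^d;\C^m)$, $\alpha>0$, and $\varepsilon\in(0,\alpha)$, take $\bm{u}_n\in\mathcal{D}$ with $\bm{u}_n\to\bm{u}$ in $L^1$ and $S\bm{u}_n\to S\bm{u}$ in measure, and use the elementary inclusion
\[
\{\|S\bm{u}\|>\alpha\}\subseteq\{\|S\bm{u}_n\|>\alpha-\varepsilon\}\cup\{\|S\bm{u}_n-S\bm{u}\|>\varepsilon\}
\]
together with subadditivity of Lebesgue measure and the hypothesis for $\bm{u}_n$ to obtain
\[
|\{\|S\bm{u}\|>\alpha\}|\le\frac{c}{\alpha-\varepsilon}\|\bm{u}_n\|_1+|\{\|S\bm{u}_n-S\bm{u}\|>\varepsilon\}|.
\]
Letting $n\to\infty$ kills the second term by convergence in measure and replaces $\|\bm{u}_n\|_1$ by $\|\bm{u}\|_1$, and then sending $\varepsilon\downarrow 0$ recovers \eqref{weak_11_dense} for $\bm{u}$. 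Uniqueness of the extension is automatic: any other linear extension satisfying \eqref{weak_11_dense} inherits the same convergence-in-measure continuity along approximating sequences from $\mathcal{D}$, so it must agree with the one constructed almost everywhere. The only nontrivial point in the whole argument is the $\varepsilon$-splitting above, which is the mechanism that lets a weak-type bound, unlike a strong-type bound, survive passage to limits in measure.
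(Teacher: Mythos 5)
Your proposal is correct and follows the same overall strategy as the paper's proof: from the weak $(1,1)$ bound on differences of elements of $\mathcal D$ you deduce that $(S\bm{u}_n)$ is globally Cauchy in measure along any $L^1$-convergent approximating sequence, extract an almost-everywhere convergent subsequence to define $S\bm{u}$, check independence of the approximating sequence, and propagate the weak-type bound. The differences are essentially ones of packaging. Where you invoke the standard Borel--Cantelli completeness theorem for convergence in measure, the paper proves this by hand: it passes through the auxiliary bounded function $\phi(s)=s/(s+1)$, controls $\int_{B(j)}\phi(\|S\bm{u}_n-S\bm{u}_k\|)$ using the weak bound, extracts a $j$-dependent a.e.\ convergent subsequence on each $B(j)$, and then diagonalizes; your route is more concise and equally rigorous. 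For the propagation step, the paper uses the set-theoretic Fatou inclusion $\{\|S\bm{u}\|>\alpha\}\subseteq\bigcup_k\bigcap_{h\ge k}\{\|S\bm{u}_{n_h}\|>\alpha\}$ together with continuity of measure along increasing families, which produces the bound $c\alpha^{-1}\|\bm{u}\|_1$ with no auxiliary parameter and requires only a.e.\ convergence of a subsequence; your $\varepsilon$-splitting $\{\|S\bm{u}\|>\alpha\}\subseteq\{\|S\bm{u}_n\|>\alpha-\varepsilon\}\cup\{\|S\bm{u}_n-S\bm{u}\|>\varepsilon\}$ first yields $c(\alpha-\varepsilon)^{-1}\|\bm{u}\|_1$ and then recovers the stated constant by letting $\varepsilon\downarrow 0$, and uses global (not merely local) convergence in measure of the whole sequence $S\bm{u}_n$ to kill the error set --- which is indeed available here, precisely because the weak $(1,1)$ hypothesis controls the measure on all of $\R^d$. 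Both routes are sound; the paper's is a little more self-contained, yours is shorter.
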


\begin{proof}
Fix ${\bm{u}}\in L^1(\R^d;\C^m)$ and let $(\bm{u}_n)\subseteq\mathcal D$ be a sequence converging to ${\bm{u}}$ in $L^1(\R^d;\C^m)$. From \eqref{weak_11_dense} it follows that, for every $k,n\in\N$,
\begin{align}
\label{weak_11_difference}
|\{x\in \R^d\colon\|(S(\bm{u}_n-\bm{u}_k))(x)\|>\alpha\}|
\leq \frac{c}{\alpha}\|\bm{u}_n-\bm{u}_k\|_1, \qquad\;\, \alpha>0.
\end{align}

We claim that there exists a subsequence $(\bm{u}_{n_k})\subseteq (\bm{u}_n)$ such that $(S\bm{u}_{n_k})$ converges pointwise almost everywhere in $\R^d$ (say in the set $A\subset\R^d$) and the limit does not depend on the sequence $(\bm{u}_n)$. Once the claim is proved, we
can easily conclude the proof denoting by $S\bm{u}$ the limit of $(S\bm{u}_{n_k})$. Indeed,
\begin{align*}
|\{x\in \R^d\colon \|(S\bm{u})(x)\|>\alpha\}|=&|\{x\in A\colon \|(S\bm{u})(x)\|>\alpha\}|\\
\le &\bigg |\bigcup_{k\in\N}\bigcap_{h\ge k}\{x\in A\colon \|(S\bm{u}_{n_h})(x)\|>\alpha\}\bigg | \\
= & \lim_{k\to\infty}\bigg |
\bigcap_{h\ge k}\{x\in A\colon \|(S\bm{u}_{n_h})(x)\|>\alpha\}\bigg | 
\le \frac{c}{\alpha}\lim_{k\to\infty}\|\bm{u}_{n_k}\|_1\\
=&\frac{c}{\alpha}\|\bm{u}\|_1.
\end{align*}
Since the proof of the claim is rather long, we split it into three steps.

{\em Step 1}. We fix $\alpha>0$ and denote by $B^+(1)$ (resp. $B^-(1)$) the set of all
$x\in B(1)$ such that $\|(S\uu_n)(x)-(S\uu_k)(x)\|>\frac{\alpha}{2|B(1)|}$
(resp.
$\|(S\uu_n)(x)-(S\uu_k)(x)\|\le\frac{\alpha}{2|B(1)|}$). Finally, we introduce the function $\phi:[0,\infty)\to [0,\infty)$, defined by $\phi(s)=s(s+1)^{-1}$ for every $s\ge 0$.

We infer that, for every $x\in B(1)$ and every $k,n\in\N$
\begin{align*}
\phi(\|(S\bm{u}_n)(x)-(S\bm{u}_k)(x)\|)
= & \chi_{B^+(1)}(x)\phi(\|(S\bm{u}_n)(x)-(S\bm{u}_k)(x)\|)\\
&+\chi_{B^-(1)}(x)\phi(\|(S\bm{u}_n)(x)-(S\bm{u}_k)(x)\|) \\
\leq & \chi_{B^+(1)}(x)+\frac{\alpha}{2|B(1)|}.
\end{align*}
Integrating on $B(1)$, from \eqref{weak_11_difference} it follows that
\begin{align*}
\|\phi(\|S\bm{u}_n-S\bm{u}_k\|)\|_{L^{1}(B(1);\R)}
\leq & \frac{2c|B(1)|}{\alpha}\|\bm{u}_n-\bm{u}_k\|_1+\frac{\alpha}{2}.
\end{align*}
The convergence of $(\bm{u}_n)$ to ${\bm{u}}$ in $L^1(\R^d;\C^m)$ implies that there exists $\overline n\in\N$ such that
\begin{align}
\label{stima_palla_alpha}
\int_{B(1)}\phi(\|(S\bm{u}_n)(x)-(S\bm{u}_k)(x)\|)dx\leq \alpha
\end{align}
for every $k,n\in\N$, with $k,n\geq \overline n$.
% We take $\alpha=\frac{1}{2}$ in \eqref{stima_palla_alpha} and let $n_1^1\in\N$ be such that 
% \begin{align*}
% \int_{B(1)}\phi(\|(S\bm{u}_n)(x)-(S\bm{u}_k)(x)\|)dx\leq \frac{1}{2}
% \end{align*}
% for every $k,n\in\N$ with $k,n\geq n_1^1$.
Choosing $\alpha=2^{-\ell}$, with $\ell\in\N$, in \eqref{stima_palla_alpha}, we infer that there exists an increasing sequence $(n_\ell^1)$ of integers such that 
\begin{align*}
\int_{B(1)}\phi(\|(S\bm{u}_{n_{\ell+1}^1})(x)-(S\bm{u}_{n_\ell^1})(x)\|)dx
\leq \frac1{2^\ell}, \qquad \ell\in\N.
\end{align*}
This implies that 
\begin{align*}
\sum_{\ell=1}^{\infty}\int_{B(1)}\phi(\|(S\bm{u}_{n_{\ell+1}^1})(x)-(S\bm{u}_{n_\ell^1})(x)\|)dx<\infty
\end{align*}
and we can determine a measurable set $A_1\subseteq B(1)$, with $|A_1|=|B(1)|$, such that  
\begin{align}
\label{conv_serie_punt_1}
\sum_{\ell=1}^{\infty}\phi(\|(S\bm{u}_{n_{\ell+1}^1})(x)-(S\bm{u}_{n_\ell^1})(x)\|)<\infty,\qquad\;\,x\in A_1.
\end{align}

Fix $x\in A_1$. From \eqref{conv_serie_punt_1} we deduce that there exists $\overline \ell\in\N$ such that $\phi(\|(S\bm{u}_{n_{\ell+1}^1})(x)-(S\bm{u}_{n_\ell^1})(x)\|)\leq \frac12$ for every $\ell\geq \overline \ell$. Hence,
\begin{align*}
\|(S\bm{u}_{n_{\ell+1}^1})(x)-(S\bm{u}_{n_\ell^1})(x)\|=&\phi^{-1}(\phi(\|(S\bm{u}_{n_{\ell+1}^1})(x)-(S\bm{u}_{n_\ell^1})(x)\|))\\
\leq & 
\phi(\|(S\bm{u}_{n_{\ell+1}^1})(x)-(S\bm{u}_{n_\ell^1})(x)\|)
\end{align*}
for every $\ell\ge\overline\ell$, and the series
$\sum_{\ell=1}^\infty\|(S\bm{u}_{n_{\ell+1}^1})(x)-(S\bm{u}_{n_\ell^1})(x)\|$ converges. Using the generalized triangle inequality we conclude that  $(S\bm{u}_{n_{\ell}^1}(x))$ is a Cauchy sequence in $\C^m$ for every $x\in A_1$. We denote by $\bm{v}_1(x)$ its limit. 

{\em Step 2}. Repeating the same computations as in Step $1$, with the unit ball being replaced by the ball centered at the origin with radius $j\in\N$, we can determine a net of subsequences $(n_\ell^1)\supset(n_\ell^2)\supset\ldots$ such that for every $j\in\N$ the sequence $(S\bm{u}_{n_\ell^j})$ converges pointwise, in a set $A_j\subseteq B(j)$, with $|A_j|=|B(j)|$, to a function denoted by $\bm{v}_j$. Clearly, $\bm{v}_j=\bm{v}_k$ on $A_k$ for every $j,k\in\N$, with $k\leq j$, and this allows us to define a function $\bm{v}$ by setting $\bm{v}(x)=\bm{v}_j(x)$ for every $x\in A_j$ and every $j\in\N$. Since the set $N=\cup_{j\in\N}(B_j\setminus A_j)$ 
%the complement of the set $N=\cup_{n\in\N}A_n$ 
has null Lebesgue measure, it follows that $\bm{v}$ is defined almost everywhere in $\R^d$.

We finally set $\bm{w}_k=\bm{u}_{n_k^k}$ for every $k\in\N$.  For every $x\in \R^d\setminus N$ there exists $j\in\N$ such that $x\in A_j$ and therefore $(S\bm{w}_k(x))$ converges to $\bm{v}(x)$ since $(\bm{w}_k)\subset (\bm{u}_{n_\ell^j})$ definitely.

{\em Step 3.} We conclude the proof by showing that, for every $\bm{u}\in L^1(\R^d;\C^m)$, the limit ${\bm{v}}$ is independent of  the choice of the approximating sequence $(\bm{u}_n)$. To this aim, let  $(\widetilde {\bm{u}}_n)$ be another sequence in $\mathcal D$ which converges to $\bm{u}$ in $L^1(\R^d;\C^m)$ and let $(\widetilde{\bm{w}}_k)$ and $\widetilde{\bm{v}}$ be the functions obtained by the arguments in the previous two steps starting from the sequence $(\widetilde {\bm{u}}_n)$. From \eqref{weak_11_difference}, with $\bm{u}_n$ and $\uu_k$ replaced, respectively, by $\bm{w}_j$ and  $\widetilde{\bm{w}}_j$, we infer that
\begin{align*}
\left|\left\{x\in\R^d\colon\|(S\bm{w}_j)(x)-(S\widetilde{\bm{w}}_j)(x)\|>\alpha\right\}\right|\leq \frac{c}{\alpha}\|\bm{w}_j-\widetilde{\bm{w}}_j\|_1, \qquad\;\, \alpha>0, \;\, j\in\N.    
\end{align*}
As $j$ tends to infinity, we deduce that
$|\{x\in\R^d:\|\bm{v}(x)-\widetilde{\bm{v}}(x)\|>\alpha\}|=0$ for every $\alpha>0$,   
which implies that $\bm{v}=\widetilde{\bm{v}}$ almost everywhere in $\R^d$. The proof is complete.
\end{proof}

\section{The boundedness of the \texorpdfstring{$\textup{H}^\infty$}{H-infinity}-calculus in \texorpdfstring{$L^p$}{Lp}}

Consider now a vector-valued operator $L$ such that $-L$ generates a strongly continuous semigroup $(e^{-tL})_{t\ge0}$ in $L^2(\R^d;\C^m)$. This means that 
\begin{align*}
e^{-tL}\bm{u}=\sum_{i,j=1}^m(T_{ij}(t)u_j){\bm e}_i,  \qquad\;\,t>0,\;\,\uu\in L^2(\R^d;\C^m),
\end{align*}
where, for every $i,j\in\{1,\dots,m\}$ and $t>0$, $T_{ij}(t)f = \left\langle e^{-tL}(f{\bm e}_j),{\bm e}_i\right\rangle$ for every $f\in L^2(\R^d;\C)$. 

% We assume that, for every $t>0$ and every $i,j\in\{1,\ldots,m\}$, there exists a kernel $p_{ij}(t,\cdot ,\cdot)\in L^\infty(\R^d\times\R^d;\C)$  such that
% \begin{equation}
% (T_{ij}(t)f)(x)=\int_{\R^d}p_{ij}(t,x,y)f(y)dy,\qquad\;\, \textit{for a.e. } x\in \R^d,\;\,f\in L^1(\R^d;\C).  
% \end{equation}
% The matrix $(p_{ij}(t,\cdot,\cdot))_{1\le i,j\le m}$ is the matrix kernel associated to the semigroup $(e^{-tL})_{t\ge0}$. 

We introduce the following assumptions.

\begin{hyps}
\label{hyp:operatore}
Let $L$ and $(e^{-tL})_{t\ge0}$ be as above and suppose that:
\begin{enumerate}[\rm(i)]
\item 
$(e^{-tL})_{t\ge0}$ extends to a bounded holomorphic semigroup in $L^2(\R^d;\C^m)$ in the sector $\Sigma_\theta$ for some $\theta\in\left(0,\frac\pi2\right]$. Then, $L$ is sectorial of angle $\frac\pi2-\theta$;
\item 
$L$ has a bounded $\textup{H}^\infty(\Sigma_\mu)$-calculus on $L^2(\R^d;\C^m)$ for some $\mu>\frac\pi2-\theta$;
\item 
for every $t>0$ and every $i,j\in\{1,\ldots,m\}$ the operator $T_{ij}(t)$ admits a kernel $p_{ij}(t,\cdot ,\cdot)$, i.e.,
\begin{equation*}
(T_{ij}(t)f)(x)=\int_{\R^d}p_{ij}(t,x,y)f(y)dy,\qquad\;\, \textit{for a.e. } x\in \R^d,\;\,f\in L^1(\R^d;\C),  
\end{equation*}
and
\begin{equation}
\label{stima_ker_smgr_gen}
|p_{ij}(t,x,y)|\le w(t,x-y), \qquad\;\, i,j\in\{1,\ldots,m\},
\end{equation}
for almost every $(x,y)\in \R^d\times \R^d$.
\end{enumerate}
\end{hyps}

The matrix $(p_{ij}(t,\cdot,\cdot))_{1\le i,j\le m}$ is called the matrix kernel associated to the semigroup $(e^{-tL})_{t\ge0}$. 

\begin{rem}
\label{rmk:adjoint}
We stress that the adjoint semigroup $(e^{-tL^*})_{t\geq0}$ is strongly continuous on $L^2(\R^d;\C^m)$ and admits a matrix of kernels whose entries $p^*_{ij}(t,\cdot,\cdot)$ satisfy, for every $t>0$, the condition $p^*_{ij}(t,x,y) = p_{ji}(t,y,x)$ for a.e. $(x,y)\in\R^d\times\R^d$ and $i,j\in\{1,\dots,m\}$. In particular, since also $L^*$ is sectorial of angle $\frac\pi2-\theta$ and $L$ admits a bounded $\textup{H}^\infty$-calculus on $L^2(\R^d;\C^m)$ if and only if $L^*$ does on the same sector, all the assumptions of this section are invariant under duality.
\end{rem}

\begin{prop}
\label{prop-3.3}
Under Hypotheses $\ref{hyp:operatore}$, for every $p\in[1,\infty)$, $(e^{-tL})_{t\geq0}$ uniquely extends to a bounded, strongly continuous semigroup $(e^{-tL_p})_{t\geq0}$ in $L^p(\R^d;\C^m)$ and we denote by $-L_p$ its infinitesimal generator.   
\end{prop}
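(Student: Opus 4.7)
The plan is to build the extension $e^{-tL_p}$ directly from the matrix kernel, inherit the uniform boundedness and the semigroup law from the Gaussian estimate, and then upgrade $L^2$-strong continuity at $t=0$ to $L^p$-strong continuity by a dominated convergence argument.

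First, for $\uu\in L^p(\R^d;\C^m)$ and $t>0$ I would set
\[
(S(t)\uu)(x)\coloneqq\sum_{i,j=1}^m\bigg(\int_{\R^d}p_{ij}(t,x,y)u_j(y)dy\bigg)\bm{e}_i.
\]
The Gaussian domination \eqref{stima_ker_smgr_gen}, together with $w(t,\cdot)\in L^q(\R^d)$ for every $q\in[1,\infty]$, makes each integral absolutely convergent for a.e. $x$, and Young's inequality applied to the scalar convolution $w(t,\cdot)\ast\|\uu\|$ gives $\|S(t)\uu\|_p\le m\|w(t,\cdot)\|_1\|\uu\|_p$. The scaling $\|w(t,\cdot)\|_1=c_1(\pi/c_2)^{d/2}$ is independent of $t$, so $S(t)$ is bounded on $L^p(\R^d;\C^m)$ uniformly in $t>0$. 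On $L^p\cap L^2$, which is dense in $L^p$ for $p\in[1,\infty)$ because it contains $C_c^\infty(\R^d;\C^m)$, $S(t)$ coincides with $e^{-tL}$ by Hypotheses~\ref{hyp:operatore}(iii); hence $S(t)$ is the unique bounded extension and I denote it by $e^{-tL_p}$.

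For the semigroup law, the $L^2$-identity $e^{-(t+s)L}=e^{-tL}e^{-sL}$ together with the uniqueness of the matrix kernel yields the Chapman--Kolmogorov relation
\[
p_{ij}(t+s,x,y)=\sum_{k=1}^m\int_{\R^d}p_{ik}(t,x,z)p_{kj}(s,z,y)dz,
\]
and the Gaussian bound (whose convolution with itself is again Gaussian) is precisely what is needed to invoke Fubini's theorem on $\uu\in L^p$, giving $e^{-(t+s)L_p}=e^{-tL_p}e^{-sL_p}$.

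The delicate point, which I expect to be the main obstacle, is strong continuity at $t=0$ when $p\neq 2$. By uniform boundedness and the density of $C_c^\infty(\R^d;\C^m)$ in $L^p(\R^d;\C^m)$, it is enough to verify $e^{-tL_p}\uu\to\uu$ in $L^p$ as $t\to 0^+$ for $\uu\in C_c^\infty(\R^d;\C^m)$ with, say, $\textrm{supp}(\uu)\subseteq B(R_0)$; this $\uu$ belongs to $L^2$, so $e^{-tL}\uu\to\uu$ in $L^2$. Inside the enlarged ball $B(2R_0)$, $L^p$-convergence follows from $L^2$-convergence via H\"older's inequality when $p\le 2$, and via the interpolation inequality $\|f\|_{L^p(B(2R_0))}\le\|f\|_{L^\infty(B(2R_0))}^{1-2/p}\|f\|_{L^2(B(2R_0))}^{2/p}$ when $p>2$, using the uniform $L^\infty$-bound on $e^{-tL_p}\uu$ provided by the kernel estimate. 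Outside $B(2R_0)$ (and assuming $R_0\ge 1$ up to replacing it by $\max\{R_0,1\}$) I would use the pointwise decay $w(t,z)\le C_k|z|^{-d-k}$ valid for $t\in(0,1]$ and $|z|\ge 1$, which follows from the boundedness of $s\mapsto s^{(d+k)/2}e^{-c_2 s}$ on $[0,\infty)$ applied to $s=|z|^2/t$. Choosing $k$ so large that $|x|^{-d-k}$ belongs to $L^p(\R^d\setminus B(2R_0))$ furnishes a $t$-independent $L^p$-dominating function for $e^{-tL_p}\uu$, while pointwise $(e^{-tL_p}\uu)(x)\to 0=\uu(x)$ for $x\notin\textrm{supp}(\uu)$ thanks to the super-polynomial decay $t^{-d/2}e^{-c_2R_0^2/t}\to 0$. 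Dominated convergence then closes the argument and yields $L^p$-strong continuity.
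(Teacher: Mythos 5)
Your proof is correct, but the route to strong continuity differs from the paper's. The paper handles $p\ne1$ purely by norm interpolation — between $L^2$ and $L^\infty$ for $p>2$, and between $L^1$ and $L^2$ for $1<p<2$, using that the endpoint norms of $e^{-tL}\f-\f$ stay uniformly bounded in $t$ while the $L^2$-norm vanishes — and only for $p=1$ does it split space into a ball $B(r+1)$ and its complement, controlling the tail with the integrated Gaussian decay of Lemma~\ref{lemma:kernel_est}(i) ($\gamma=1$), which gives the explicit bound $ct/(t+1)\to0$. You instead run the spatial split for every $p$ and close the tail contribution by dominated convergence against a $t$-uniform polynomial majorant $C_k|x|^{-d-k}$ extracted from the Gaussian kernel. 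These two tail arguments are interchangeable: your pointwise majorant is the local form of what Lemma~\ref{lemma:kernel_est}(i) packages globally, and conversely the paper's $p=1$ argument is essentially your dominated convergence made explicit. The paper's interpolation shortcut for $p\ne1$ is shorter; your approach treats all $p$ uniformly at the modest cost of the dominated-convergence machinery. One detail to tighten: you invoke Hypothesis~\ref{hyp:operatore}(iii) to identify $S(t)$ with $e^{-tL}$ on all of $L^p\cap L^2$, but the hypothesis gives the kernel representation only for $L^1$ (hence $L^1\cap L^2$) arguments; for $p>2$ you need one more line extending the identification from $C_c^\infty$ to $L^2$ by boundedness of both operators and density, before restricting to $L^p\cap L^2$.
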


\begin{proof}
Throughout the proof, by $c$ we denote a positive constant, which depends at most on $d$, $m$, $p$, $c_1$ and $c_2$ and may vary from line to line.

We fix $p\in [1,\infty)$ and observe that, from Hypothesis \ref{hyp:operatore}(iii), it follows that 
\begin{align*}
\|e^{-tL}\bm{f}\|_p^p
=&\int_{\R^d}\|(e^{-tL}\bm{f})(x)\|^pdx
\leq  c\sum_{k=1}^m\int_{\R^d}\left(\int_{\R^d}w(t,x-y)|f_k(y)|dy\right)^pdx \\
\leq & c\|w(t,\cdot)\|_1^p\sum_{k=1}^m\|f_k\|_p^p
\end{align*}
for every $\bm{f}\in C^\infty_c(\R^d;\C^m)$. This implies that, for every $t>0$, the operator $e^{-tL}$ uniquely extends to a linear bounded operator, which we denote by $T_p(t)$, on $L^p(\R^d;\C^m)$, and it satisfies
\begin{align*}
%\label{stima_smgr_Lp}
\|T_p(t)\bm{f}\|_p\leq c\|\bm{f}\|_p, \qquad\;\, \bm{f}\in L^p(\R^d;\C^m).
\end{align*}
Moreover,  
\begin{align*}
\|e^{-tL}\bm{f}\|_{\infty}
\leq & m\|w(t,\cdot)\|_{1}\sum_{k=1}^m\|f_k\|_{\infty}\le c\|\f\|_{\infty},\qquad\;\,\f\in C^{\infty}_c(\R^d;\C^m).
\end{align*}

Let us prove that $(T_p(t))_{t\geq0}$ is a strongly continuous semigroup on $L^p(\R^d;\C^m)$ for every $p\in[1,\infty)$. 
If $p>2$, then for every $\bm{f}\in C^\infty_c(\R^d;\C^m)$, we get
\begin{align*}
\lim_{t\to 0^+}\|T_p(t)\bm{f}-\bm{f}\|_p
&\leq \lim_{t\to0^+}\|e^{-tL}\bm{f}-\bm{f}\|_2^\frac2p\|e^{-tL}\bm{f}-\bm{f}\|_\infty^{1-\frac2p}\\
&\leq c\|\bm{f}\|_\infty^{1-\frac2p}\lim_{t\to 0^+}\|e^{-tL}\bm{f}-\bm{f}\|_2^\frac{2}{p}=0.
%\label{lim_Lp_Cinf}
\end{align*}

Since the family of operators $(T_p(t))_{t\ge 0}$ is uniformly bounded and $C^{\infty}_c(\R^d;\C^m)$ is dense in $L^p(\R^d;\C^m)$ we can infer that $(T_p(t)\f)_{t\geq0}$ converges to $\f$ in $L^p(\R^d;\C^m)$ as $t$ tends to $0^+$ for every $\f\in L^p(\R^d;\C^m)$.

% If for every $\bm{f}\in L^p(\R^d;\R^m)$ we consider a sequence $(\bm{f}_n)_{n\in\N}\subset C^\infty_c(\R^d;\R^m)$ which converges to $\bm{f}$ in $L^p(\R^d;\R^m)$, then from \eqref{stima_smgr_Lp} for every $n\in\N$ we obtain
% \begin{align}
% \label{lim_Lp_gen}
% \lim_{t\to0}\|e^{-tL}\bm{f}-\bm{f}\|_p
% \leq & \lim_{t\to0}\big(\|e^{-tL}(\bm{f}-\bm{f}_n)\|_p+\|e^{-tL}\bm{f}_n-\bm{f}_n\|_p+\|\bm{f}_n-\bm{f}\|_p\big) \notag  \\
% \leq & (1+cc_1(\pi c_2)^{-\frac d2})\|\bm{f}_n-\bm{f}\|_p+\lim_{t\to\infty}\|e^{-tL}\bm{f}_n-\bm{f}_n\|_p.
% \end{align}
% Fix $\varepsilon>0$ and let $n_0\in\N$ be such that $\|\bm{f}_{n_0}-\bm{f}\|_p\leq \varepsilon$. From \eqref{lim_Lp_Cinf} and \eqref{lim_Lp_gen}, with $n=n_0$, we infer that
% \begin{align*}
% \lim_{t\to0}\|e^{-tL}\bm{f}-\bm{f}\|_p\leq (1+cc_1(\pi c_2)^{-\frac d2})\varepsilon.
% \end{align*}
% The arbitrariness of $\varepsilon>0$ implies that $(e^{-tL}\bm{f})_{t\geq0}$ converges to $\bm{f}$ in $L^p(\R^d;\R^m)$ as $t$ tends to $0$ for every $\bm{f}\in L^p(\R^d;\R^m)$. \\

Similar arguments hold for $p\in(1,2)$, interpolating $L^p(\R^d;\C^m)$ between $L^1(\R^d;\C^m)$ and $L^2(\R^d;\C^m)$, and show that also for $p$ varying in this range $(T_p(t)\bm{f})_{t\geq0}$ converges to $\bm{f}$ in $L^p(\R^d;\C^m)$ as $t$ tends to $0^+$ for every $\bm{f}\in L^p(\R^d;\C^m)$. 

It remains to consider the case $p=1$.  Here, we take advantage of Lemma \ref{lemma:kernel_est}(i). Indeed, for every $\bm{f}\in C^\infty_c(\R^d;\C^m)$ with support contained in $B(r)$ for some $r>0$ we get
\begin{align*}
\int_{B(r+1)}\|(e^{-tL}\bm{f})(x)-\bm{f}(x)\|dx\leq |B(r+1)|^\frac{1}{2}\|e^{-tL}\bm{f}-\bm{f}\|_2.    
\end{align*}
Moreover, for every $h\in \{1,\ldots,m\}$ it follows that
\begin{align*}
\int_{\R^d\setminus B(r+1)}|(e^{-tL}\bm{f}(x))_h-f_h(x)|dx 
= & \int_{\R^d\setminus B(r+1)}|(e^{-tL}\bm{f}(x))_h|dx \\
= & \int_{\R^d\setminus B(r+1)}\bigg |\sum_{k=1}^m\int_{B(r)}p_{hk}(t,x,y)f_k(y)dy\bigg |dx \\
\leq & \sum_{k=1}^m\int_{\R^d\setminus B(r+1)}\bigg (\int_{B(r)}w(t,x-y)|f_k(y)|dy\bigg )dx.
\end{align*}
By applying Lemma \ref{lemma:kernel_est}(i) with $\gamma=1$, we infer that
\begin{align*}
&\sum_{k=1}^m\int_{\R^d\setminus B(r+1)}\bigg (\int_{B(r)}w(t,x-y)|f_k(y)|dy\bigg )dx\\
\leq & \sum_{k=1}^m\int_{B(r)}|f_k(y)|dy\int_{\R^d\setminus B(y,1)}w(t,x-y)dx \\
\leq & \frac{ct}{t+1}\sum_{k=1}^m\int_{\R^d}|f_k(y)|dy \\
\leq & \frac{c t}{t+1}\|\bm{f}\|_1.
\end{align*}
Letting $t$ tend to $0^+$, from the strong continuity of $(e^{-tL})_{t\geq0}$ in $L^2(\R^d;\C^m)$ it follows that
\begin{align*}
&\lim_{t\to0^+}\|e^{-tL}\bm{f}-\bm{f}\|_1\\
= & \lim_{t\to0^+}\bigg (\int_{B(r+1)}\|e^{-tL}\bm{f}(x)-\bm{f}(x)\|dx+\int_{\R^d\setminus B(r+1)}\|e^{-tL}\bm{f}(x)-\bm{f}(x)\|dx\bigg ) \\
\leq & \lim_{t\to0^+}\bigg(|B(r+1)|^{\frac12}\|e^{-tL}\bm{f}-\bm{f}\|_2+\frac{c t}{t+1}\|\bm{f}\|_1\bigg)=0.
\end{align*}
Again, the density of $C^\infty_c(\R^d;\C^m)$ in $L^1(\R^d;\C^m)$ and the fact that $(e^{-tL})_{t\geq0}$ is bounded in $L^1(\R^d;\C^m)$ yield the thesis. 

Finally, the semigroup property of $(T_p(t))_{t\geq0}$ clearly holds true for $\bm{f}\in C^\infty_c(\R^d;\C^m)$ and, by density, extends to every $\bm{f}\in L^p(\R^d;\C^m)$ with $p\in[1,\infty)$. 
\end{proof}

Thanks to Hypothesis \ref{hyp:operatore}(iii), we deduce the following complex Gaussian kernel estimates.

\begin{prop}
\label{prop:cplx_kernel_est_gen}
For every  $\beta\in\left(0,\theta\right)$, there exists a positive constant $c_{1,\beta}$ such that, for every $i,j\in\{1,\ldots,m\}$ and every $z\in \Sigma_{\beta}$, 
\begin{align*}
%\label{stime_nucleo_cpl_finale_2}
|p_{ij}(z,x,y)|\leq c_{1,\beta}({\rm Re}(z))^{-\frac d2}\exp\left(-\frac{c_2|x-y|^2}{8(1+\tan(\beta)){\rm Re}(z)}\sin\bigg (\frac{\theta-\beta}{2}\bigg )\right),
\end{align*}
for almost every $(x,y)\in\R^{d}\times\R^{d}$. Further, the same estimate holds true for $p_{ij}^* (i,j\in\{1,\ldots,m\})$.
\end{prop}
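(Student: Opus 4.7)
The strategy is to lift the real-time Gaussian kernel bound of Hypothesis~\ref{hyp:operatore}(iii) to complex times in $\Sigma_\beta$ by combining it with the uniform $L^2$ boundedness of $(e^{-zL})_{z \in \Sigma_\theta}$ from Hypothesis~\ref{hyp:operatore}(i), through a Phragm\'en--Lindel\"of argument in the sector $\Sigma_\theta$. To set things up, I would fix $i, j \in \{1, \ldots, m\}$ and, given measurable sets $E, F \subseteq \R^d$ together with $\f, \bm{g} \in L^2(\R^d;\C^m)$ satisfying $\mathrm{supp}\,\f \subseteq E$ and $\mathrm{supp}\,\bm{g} \subseteq F$, consider the holomorphic function $\Phi(z) = \langle e^{-zL}\f, \bm{g}\rangle_{L^2}$ on $\Sigma_\theta$. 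Hypothesis~\ref{hyp:operatore}(iii) together with Cauchy--Schwarz yields a Gaussian-in-$d(E,F)^2/t$ bound $|\Phi(t)| \leq c_1 t^{-d/2} e^{-c_2 d(E,F)^2/t} \|\f\|_1 \|\bm{g}\|_1$ on the positive real axis, while Hypothesis~\ref{hyp:operatore}(i) gives the sector bound $|\Phi(z)| \leq M \|\f\|_2\|\bm{g}\|_2$.

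The first core step is to establish a complex-time $L^2$-off-diagonal (Davies--Gaffney) estimate of the form $|\Phi(z)| \leq C_\beta \exp(-c_\beta d(E,F)^2/\mathrm{Re}(z)) \|\f\|_2\|\bm{g}\|_2$ for $z \in \Sigma_\beta$. To do so, introduce the holomorphic weight $\Psi(z) = \exp(\alpha d(E,F)^2/z)$ with $\alpha \in (0, c_2)$, which cancels the Gaussian factor on the real axis, and conformally map the upper half-sector $\{0 < \arg z < \theta\}$ onto the strip $\{0 < \mathrm{Im}(w) < \theta\}$ via $z = e^w$. After a preliminary truncation to handle the essential singularity of $\Psi$ at the origin, Hadamard's three-lines theorem applied to the image of $\Phi\cdot\Psi$ (together with an appropriate power factor encoding the dimensional scaling $t^{-d/2}$) yields the desired decay. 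The constants $\sin((\theta - \beta)/2)$ and $(1 + \tan\beta)$ should emerge, respectively, from the sharp angular interpolation at $|\arg z| \leq \beta$ and from the conversion $\mathrm{Re}(z) \geq |z|\cos\beta$ along the sides of $\Sigma_\beta$.

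In the second step, to transfer this sesquilinear bound into a pointwise kernel estimate, factorize $e^{-zL} = e^{-\tau L} \circ e^{-(z - 2\tau)L} \circ e^{-\tau L}$ with $\tau = \kappa\,\mathrm{Re}(z)$ and $\kappa > 0$ small enough that $z - 2\tau \in \Sigma_\theta$. This expresses $p_{ij}(z, x, y)$ as a triple convolution in which the outer kernels are controlled pointwise by $w(\tau, \cdot)$ via Hypothesis~\ref{hyp:operatore}(iii), while the middle complex-time factor is controlled by the Davies--Gaffney estimate of the first step, applied to the test functions $w(\tau, x - \cdot)\,\bm{e}_k$ and $w(\tau, \cdot - y)\,\bm{e}_\ell$. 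A Cauchy--Schwarz computation using the identity $\|w(\tau, \cdot)\|_2^2 \sim \tau^{-d/2}$, followed by optimization over $\kappa$, produces the desired pointwise complex Gaussian bound on $p_{ij}$, with the prefactor $(\mathrm{Re}(z))^{-d/2}$ coming from the $\tau^{-d/2}$ factors. The statement for $p_{ij}^*$ follows by applying the same argument to $L^*$, which satisfies the same hypotheses in view of Remark~\ref{rmk:adjoint}. The main obstacle is the Phragm\'en--Lindel\"of step, since the weight $\Psi$ has an essential singularity at $0$: a careful truncation/limiting procedure in the strip, together with the explicit tracking of all constants through the conformal map and back, is needed to produce the exponent $c_2 \sin((\theta-\beta)/2)/(8(1+\tan\beta))$ in its precise form.
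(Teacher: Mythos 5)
Your proposal takes a genuinely different route from the paper. The paper works directly at the $L^1\to L^\infty$ level: it first factors $e^{-(t+is)L}$ through $L^2$ to obtain $\|e^{-(t+is)L}\|_{\mathscr L(L^1,L^\infty)}\le c\,t^{-d/2}$ on a subsector, then applies the ready-made Phragm\'en--Lindel\"of lemma \cite[Lemma 6.18]{Ou05} to the scalar function $G(z)=\int_{\R^d}\langle e^{-zL}(\chi_F f\bm{e}_j),\chi_E g\bm{e}_i\rangle\,dx$ so as to transfer the Gaussian-in-$d(E,F)^2/t$ decay from the positive real axis into $\Sigma_\beta$ with the $L^1$--$L^1$ duality pairing intact. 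The pointwise kernel bound then follows at once from the arbitrariness of $f$ and $g$, with no conversion step. You propose instead to derive first an $L^2$--$L^2$ off-diagonal (Davies--Gaffney) estimate for complex time, and then to upgrade to pointwise bounds via a triple factorization $e^{-zL}=e^{-\tau L}e^{-(z-2\tau)L}e^{-\tau L}$.

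The first step is conceptually sound, though you should note that the real-axis bound you quote is in terms of $\|\f\|_1\|\bm{g}\|_1$ while the sector bound is in $\|\f\|_2\|\bm{g}\|_2$; to run the three-lines argument for a Davies--Gaffney conclusion you must first extract from the Gaussian kernel bound the real-axis $L^2$--$L^2$ off-diagonal estimate $|\Phi(t)|\le C e^{-c\,d(E,F)^2/t}\|\f\|_2\|\bm{g}\|_2$ and interpolate with $L^2$ norms throughout. The genuine gap is in your Step 2. The test functions $w(\tau,x-\cdot)\bm{e}_k$ and $w(\tau,\cdot-y)\bm{e}_\ell$ to which you apply the Davies--Gaffney estimate have supports equal to all of $\R^d$, so the distance between their supports is zero and the off-diagonal estimate yields \emph{no} Gaussian decay: plugging them in as you describe produces only the $(\operatorname{Re}z)^{-d/2}$ prefactor and nothing more. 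Repairing this requires a genuine annular decomposition (split each Gaussian weight into dyadic shells centred at $x$ and $y$, apply the Davies--Gaffney bound to each pair of shells, and sum the resulting series using the Gaussian decay of $w(\tau,\cdot)$ on the tails); this is exactly the nontrivial Davies--Gaffney-to-Gaussian upgrade in the Blunck--Kunstmann / Auscher--Martell vein, and it is what your sketch omits. The paper's formulation avoids the whole issue, because with $L^1\to L^\infty$ off-diagonal bounds the kernel estimate falls out of duality directly.
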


\begin{proof}
Fix $t>0$. From \eqref{stima_ker_smgr_gen} and straightforward computations, it follows that
\begin{align}
\label{stimaL2Linfinity_gen}
\|e^{-tL}\bm{f}\|_{\infty}
\le c_{2,\infty}t^{-\frac{d}{4}}\|\bm{f}\|_2
\end{align}
for every $\bm{f}\in C^\infty_c(\R^d;\C^m)$. Taking Remark \ref{rmk:adjoint} into account, we also deduce that 
\begin{align*}
\|e^{-tL^*}\bm{f}\|_{\infty}
\le c_{2,\infty}^*t^{-\frac{d}{4}}\|\bm{f}\|_2
\end{align*}
for every $\bm{f}\in C^\infty_c(\R^d;\C^m)$. In particular, this implies that 
\begin{align}
\label{stimaL1L2_gen}
\|e^{-tL}\|_{\mathscr L(L^1(\R^d;\C^m),L^2(\R^d;\C^m))}\leq c_{2,\infty}^*t^{-\frac{d}{4}}.
\end{align}

We fix $\beta$ as in the statement and set $\beta'=\frac{\beta+\theta}{2}$, $\beta''=\frac{3\theta+\beta}{4}$ and observe that, if $t+is\in \Sigma_{\beta'}$, then
$\delta t+is\in \Sigma_{\beta''}$ with $\delta=\frac{\tan(\beta')}{\tan(\beta'')}\in(0,1)$. 
Since  $(e^{-tL})_{t\ge 0}$ extends to a bounded analytic semigroup in $L^2(\R^d;\C^m)$ in the sector $\Sigma_\theta$, from \eqref{stimaL2Linfinity_gen} and \eqref{stimaL1L2_gen} we infer that
\begin{align}
\label{stime_smgr_complex_gen}
&\|e^{-(t+is)L}\|_{\mathscr L(L^1(\R^d;\C^m),L^\infty(\R^d;\C^m))} \notag \\
\leq & \|e^{-\frac{1-\delta}{2}tL}\|_{\mathscr L(L^1(\R^d;\C^m),L^2(\R^d;\C^m))}
\|e^{-(\delta t+is)L}\|_{\mathscr L(L^2(\R^d;\C^m),L^2(\R^d;\C^m))} \notag \\
& 
\quad\times\|e^{-\frac{1-\delta}{2}tL}\|_{\mathscr L(L^2(\R^d;\C^m),L^\infty(\R^d;\C^m))}\notag  \\
\leq & ct^{-\frac d2}
\end{align}
for every $t+is\in \Sigma_{\beta'}$, where, here and below, $c$ denotes a positive constant, which depends at most on $d$, $m$, $\beta$, $c_1$, and may vary from line to line.

Let us fix $i,j\in\{1,\ldots,m\}$, $f,g\in C^\infty_c(\R^d;\C)$ and two bounded subsets $E,F$ of $\R^d$ with positive distance. We introduce the function $G:\Sigma_{\beta'}\to\C$, defined by
\begin{equation}
\label{funct_G_gen}
G(z)\coloneqq\int_{\R^d}\langle (e^{-zL}(\chi_F f\bm{e}_j))(x),\chi_E(x)g(x)\bm{e}_i\rangle dx=\int_{\R^d}((e^{-zL}(\chi_F f\bm{e}_j))(x))_i\chi_E(x)\overline{g(x)}dx
\end{equation}
for every $z\in \Sigma_{\beta'}$. From \eqref{stima_ker_smgr_gen}, it follows that
\begin{align*}
|G(t)|\leq c_1t^{-\frac d2}e^{-\frac{c_2(d(E,F))^2}{t}}\|f\|_1\|g\|_1, \qquad t>0.   \end{align*}
Further, from \eqref{stime_smgr_complex_gen}, we infer that
\begin{align*}
|G(z)|\leq c({\rm Re}\,z)^{-\frac d2}\|f\|_1\|g\|_1, \qquad\;\, z\in \Sigma_{\beta'}.    
\end{align*}
By applying \cite[Lemma 6.18]{Ou05}, with $\alpha=1$ and $b=c_2(d(E,F))^2$,  we deduce that 
\begin{align}
\label{stima_G_1_gen}
|G(z)|\leq c({\rm Re}(z))^{-\frac d2}\exp\bigg (-\frac{c_2(d(E,F))^2}{2|z|}\sin\bigg (\frac{\theta-\beta}{2}\bigg )\bigg )\|f\|_1\|g\|_1
\end{align}
for every $z\in \Sigma_{\beta}$. The arbitrariness of the functions $f$ and $g$ implies that the operator $f\mapsto (e^{-zL}(\chi_Ff\bm{e}_j))_i\chi_E$ is bounded from $L^1(\R^d;\C)$ into $L^\infty(\R^d;\C)$ and
\begin{align*}
\|(e^{-zL}(\chi_F\cdot\bm{e}_j))_i\chi_E\|_{\mathscr L(L^1(\R^d;\C),L^\infty(\R^d;\C))}\leq c({\rm Re}(z))^{-\frac{d}{2}}\exp\bigg (\!-\frac{c_2(d(E,F))^2}{2|z|}\sin\bigg (\frac{\theta\!-\!\beta}{2}\bigg )\!\bigg )
\end{align*}
for every $z\in \Sigma_\beta$, which gives
\begin{align}
\label{stime_nucleo_compl_gen}
|p_{ij}(z,x,y)\chi_E(x)\chi_F(y)|\leq c({\rm Re}(z))^{-\frac d2}\exp\bigg (-\frac{c_2(d(E,F))^2}{2|z|}\sin\bigg (\frac{\theta-\beta}{2}\bigg )\bigg )  
\end{align}
for every $z\in \Sigma_{\beta}$, and almost every $(x,y)\in\R^{d}\times\R^{d}$ (see the proof of \cite[Theorem 1.3]{AB94}). 

Fix $z\in \Sigma_\beta$ and $(x,y)\in\R^{d}\times\R^{d}$ such
that \eqref{stime_nucleo_compl_gen} is satisfied. 
Further, set $E=B(x,r)$ and $F=B(y,r)$, where $r=\frac{1}{4}|x-y|$. Then, it follows that $d(E,F)=\frac{|x-y|}{2}$, which replaced in \eqref{stime_nucleo_compl_gen} yields the assertion observing that 
$|z|\le (1+{\rm tan}(\beta)){\rm Re}(z)$ for every $z\in \Sigma_{\beta}$.

Finally, the estimate for $p_{ij}^*$ ($i,j\in\{1,\ldots,m\}$) follows from Remark \ref{rmk:adjoint}.
\end{proof}

By means of Proposition \ref{prop:cplx_kernel_est_gen}, we provide Gaussian kernel estimates for the time-derivatives of the function $z\mapsto e^{-zL}$. To be more precise, we show kernel estimates for the operator $L^ke^{-zL}$, $k\in\N$ and $z\in\Sigma_{\beta}$ with $\beta\in\left(0,\theta\right)$, and, recalling that $\frac{d^k}{dz^k}e^{-zL}=(-L)^ke^{-zL}$ for every $z\in \Sigma_\theta$, to get the desired estimates. 

Fix $k\in \N$. From 
\eqref{stimaL2Linfinity_gen}, \eqref{stimaL1L2_gen} and the analyticity of $(e^{-tL})_{t\ge0}$ in $L^2(\R^d;\C^m)$, we deduce that
\begin{align*}
& \|L^k e^{-tL}\|_{\mathscr L(L^1(\R^d;\C^m),L^\infty(\R^d;\C^m))} \\
= & \|e^{-\frac t3L}L^k e^{-\frac t3L}e^{-\frac t3L}\|_{\mathscr L(L^1(\R^d;\C^m),L^\infty(\R^d;\C^m))} \\
\leq & \|e^{-\frac t3L}\|_{\mathscr L(L^1(\R^d;\C^m),L^2(\R^d;\C^m))}\|L^k e^{-\frac t3L}\|_{\mathscr L(L^2(\R^d;\C^m))}
\|e^{-\frac t3L}\|_{\mathscr L(L^2(\R^d;\C^m),L^\infty(\R^d;\C^m))} \\
\leq & ct^{-\frac d2-k}
\end{align*}
for every $t>0$ and some positive constant $c$.

Arguing componentwise, from the Dunford-Pettis Theorem we deduce that for every $i,j\in\{1,\ldots,m\}$ and every $t\in(0,\infty)$, there exists a kernel $p_{ij}^k(t,\cdot,\cdot)\in L^\infty(\R^d\times\R^d)$ such that
\begin{equation*}
(L^k e^{-tL}(\varphi \bm{e}_j)(x))_i
= \int_{\R^d}p_{ij}^k(t,x,y)\varphi(y)dy
\end{equation*}
for every $t>0$,  $\varphi\in L^1(\R^d)$, $i,j\in\{1,\ldots,m\}$ and almost every $x\in\R^d$. Further, $\|p_{ij}^k(t,\cdot,\cdot)\|_{\infty}\leq ct^{-\frac d2-k}$ for every $t>0$ and every $i,j\in\{1,\ldots,m\}$. 

The following result provides estimates of $p_{ij}^k$, $k\in\N$, also in terms of the spatial variables.

\begin{prop}\label{prop:stime:nucleo:He-tH}
For every $\beta\in\left(0,\theta\right)$ and $k\in\N$, there exists a positive constant $c_{1,k,\beta}$ such that, for every $i,j\in\{1,\ldots,m\}$ and every $z\in \Sigma_\beta$,
\begin{align*}
%\label{stime_nucleo_cpl_H_finale}
|p_{ij}^k(z,x,y)|\leq c_{1,k,\beta}({\rm Re}(z))^{-\frac d2-k}\exp\bigg (-\frac{c_2|x-y|^2}{16(1+\tan(\beta)){\rm Re}(z)}\sin\bigg (\frac{\theta-\beta}{4}\bigg )\bigg )
\end{align*}
for almost every $(x,y)\in\R^{d}\times\R^{d}$.
\end{prop}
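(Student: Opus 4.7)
The plan is to derive the bound on $p_{ij}^k(z,x,y)$ from the complex Gaussian estimate on $p_{ij}(\zeta,x,y)$ already obtained in Proposition~\ref{prop:cplx_kernel_est_gen}, by exploiting the identity $L^k e^{-zL}=(-1)^k\frac{d^k}{dz^k}e^{-zL}$ and Cauchy's integral formula for derivatives of an operator-valued holomorphic function. Concretely, I would fix $\beta\in(0,\theta)$, $k\in\N$, and introduce the intermediate angle $\beta'\coloneqq(\theta+\beta)/2\in(\beta,\theta)$. For every $z\in\Sigma_\beta$, the distance from $z$ to $\partial\Sigma_{\beta'}$ is bounded from below by a constant multiple of ${\rm Re}(z)$, so one can pick a radius $r=\rho\,{\rm Re}(z)$ with $\rho=\rho(\beta,\theta)\in(0,\tfrac12)$ such that the circle $\gamma_z$ of radius $r$ centered at $z$, oriented counterclockwise, is entirely contained in $\Sigma_{\beta'}$. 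Since $\zeta\mapsto e^{-\zeta L}$ is holomorphic on $\Sigma_\theta$ in the operator norm of $\mathscr L(L^2(\R^d;\C^m))$, Cauchy's formula gives
\begin{align*}
L^k e^{-zL}=\frac{(-1)^k k!}{2\pi i}\oint_{\gamma_z}\frac{e^{-\zeta L}}{(\zeta-z)^{k+1}}d\zeta,
\end{align*}
with convergence in the operator norm.

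Next I would pass from this operator identity to a pointwise identity for the kernels. Substituting the kernel representation of $e^{-\zeta L}$, testing against $\varphi\in L^1(\R^d;\C)$ and an $L^1$ test function, and applying Fubini's theorem (which is legitimate because $\zeta\mapsto p_{ij}(\zeta,x,y)$ is uniformly bounded on the compact set $\gamma_z$ in view of Proposition~\ref{prop:cplx_kernel_est_gen}, while $\zeta\mapsto(\zeta-z)^{-(k+1)}$ is smooth there), I would obtain
\begin{align*}
p_{ij}^k(z,x,y)=\frac{(-1)^k k!}{2\pi i}\oint_{\gamma_z}\frac{p_{ij}(\zeta,x,y)}{(\zeta-z)^{k+1}}d\zeta
\end{align*}
for a.e.\ $(x,y)\in\R^d\times\R^d$. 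On $\gamma_z$ one has $|\zeta-z|=r=\rho\,{\rm Re}(z)$ and ${\rm Re}(\zeta)$ is comparable to ${\rm Re}(z)$ (up to the factor $1\pm\rho$), so applying Proposition~\ref{prop:cplx_kernel_est_gen} with parameter $\beta'$ (noting that $\sin((\theta-\beta')/2)=\sin((\theta-\beta)/4)$) and using the ML-inequality produces
\begin{align*}
|p_{ij}^k(z,x,y)|\le \frac{c\,k!}{r^k}({\rm Re}(z))^{-\frac d2}\exp\bigg(-\frac{c_2|x-y|^2}{C\,{\rm Re}(z)}\sin\Big(\frac{\theta-\beta}{4}\Big)\bigg),
\end{align*}
for some constant $C=C(\beta,\theta,\rho)$. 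Inserting $r^{-k}=\rho^{-k}({\rm Re}(z))^{-k}$ produces the factor $({\rm Re}(z))^{-d/2-k}$, and choosing $\rho$ appropriately (together with, if necessary, weakening the exponent by enlarging the constant in its denominator) forces $C\le 16(1+\tan\beta)$, yielding the stated estimate.

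The main technical obstacle is not the Cauchy integral manipulation, which is standard, but the bookkeeping of constants: one must choose the ratio $\rho=r/{\rm Re}(z)$ small enough that $\gamma_z\subset\Sigma_{\beta'}$ and at the same time balance the factor $r^{-k}$ produced by the contour against the geometric constants appearing in the exponent of Proposition~\ref{prop:cplx_kernel_est_gen}. Since only the existence of \emph{some} Gaussian decay is ultimately used, any loss in the constants can be absorbed harmlessly into $c_{1,k,\beta}$.
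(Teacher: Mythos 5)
Your proof is correct and follows essentially the same route as the paper: pick the intermediate angle $\beta'=(\theta+\beta)/2$, use Cauchy's integral formula for the $k$-th derivative on a circle of radius proportional to ${\rm Re}(z)$ inside $\Sigma_{\beta'}$, and feed in the complex Gaussian bound of Proposition~\ref{prop:cplx_kernel_est_gen} with the observation that $\sin((\theta-\beta')/2)=\sin((\theta-\beta)/4)$; the paper merely carries this out for the scalar auxiliary function $G$ of \eqref{funct_G_gen} (applying Cauchy's formula to $G$ and converting to a kernel bound only at the end by the $E,F$ duality argument, which sidesteps the Fubini/null-set bookkeeping your operator-level version requires). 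One small caution in your closing remark: a worse constant in the \emph{exponent} denominator cannot be absorbed into the prefactor $c_{1,k,\beta}$ -- the correct mechanism is exactly the one you already name, namely choosing the radius ratio $\rho$ small enough that $(1+\tan\beta')(1+\rho)\le 2(1+\tan\beta)$, which does hold for every $\beta\in(0,\theta)$.
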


\begin{proof}
Throughout the proof, $c$ will denote a positive constant, which depends at most on $d$, $m$, $\beta$, $\theta$ and $k$, but it is independent of the functions that we consider. Moreover, it may vary from line to line.

Fix $\beta$ as in the statement and set
$\beta'=\frac{\theta+\beta}{2}$ and
$\beta''=\frac{3\theta+\beta}{4}$. For every $i,j\in\{1,\ldots,m\}$, $f,g\in C^\infty_c(\R^d;\C)$, let $G$
be the function in \eqref{funct_G_gen}. Note that $G$ is holomorphic in $\Sigma_{\theta}$. Moreover, for every $z\in \Sigma_{\beta}$, the choice $r=\frac12\sin(\beta'-\beta){\rm Re}(z)$ implies that the circumference of centre $z$ and radius $r$ is contained in $\Sigma_{\beta'}$. From the Cauchy formula, we deduce that
\begin{align*}
G^{(k)}(z)
= \frac{k!}{2\pi i}\int_{\partial B(z,r)}\frac{G(\lambda)}{(\lambda-z)^{k+1}}d\lambda
= \frac{k!}{2\pi} \int_0^{2\pi}\frac{G(z+re^{i\tau})}{r^ke^{ik\tau}}d\tau, \qquad\;\, z\in \Sigma_{\beta}.
\end{align*}
If in the proof of Proposition \ref{prop:cplx_kernel_est_gen} we consider $\beta=\beta'$ and $\beta'=\beta''$, then from \eqref{stima_G_1_gen}, we get, for every $z\in \Sigma_{\beta}$,
\begin{align*}
|G^{(k)}(z)|
\leq 
cr^{-k}\|f\|_1\|g\|_1\int_0^{2\pi}({\rm Re}(z+re^{i\tau}))^{-\frac d2}\exp\bigg (-\frac{c_2(d(E,F))^2}{2|z+re^{i\tau}|}\sin\bigg (\frac{\theta-\beta}{4}\bigg )\bigg )d\tau.
\end{align*}
Since ${\rm Re}(z+re^{i\tau})\geq \left(1-\frac12\sin(\beta'-\beta)\right){\rm Re}(z)$
and $|z+re^{i\tau}|\leq |z|+r\leq 2|z|$, it follows that
\begin{align*}
|G^{(k)}(z)|
\leq c({\rm Re}(z))^{-\frac d2-k}\exp\bigg (-\frac{c_2(d(E,F))^2}{4|z|}\sin\bigg (\frac{\theta-\beta}{4}\bigg )\bigg )\|f\|_1\|g\|_1, \qquad z\in \Sigma_{\beta}.
\end{align*}
The assertion now follows arguing as in the final part of the proof of Proposition \ref{prop:cplx_kernel_est_gen}.
\end{proof}

Now we are able to prove the following result.
\begin{theo}
\label{thm:Hinfinity_calculus_Lp:gen}
The operator $L$ admits a bounded $\textup{H}^\infty(\Sigma_\mu)$-calculus on $L^p(\R^d;\C^m)$ for every $p\in(1,\infty)$ and every $\mu\in\left(\frac\pi2-\theta,\pi\right)$.
\end{theo}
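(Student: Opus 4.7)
The plan is to implement the strategy of Duong-Robinson \cite{DuRo96} in the vector-valued setting. By the $L^2$-calculus (Hypothesis \ref{hyp:operatore}(ii)), the Marcinkiewicz interpolation theorem, the duality symmetry (Remark \ref{rmk:adjoint}), and a McIntosh-type approximation argument that extends norm bounds from the dense subclass $\Psi(\Sigma_\mu)$ to all of $\textup{H}^\infty(\Sigma_\mu)$, the whole problem reduces to proving that $\xi(L)$ is of weak type $(1,1)$ on $L^1(\R^d;\C^m)$ for every $\xi \in \Psi(\Sigma_\mu)$, with constant controlled by $\|\xi\|_{\infty,\mu}$.

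Fix such a $\xi$, $\bm{u} \in L^1(\R^d;\C^m) \cap L^2(\R^d;\C^m)$ and $\alpha > 0$. Apply the scalar Calder\'on-Zygmund decomposition to the nonnegative function $x\mapsto\|\bm{u}(x)\|$ at height $\alpha$: this yields pairwise disjoint dyadic cubes $(Q_j)$ with $\sum_j|Q_j|\le c\alpha^{-1}\|\bm{u}\|_1$, together with a decomposition $\bm{u}=\bm{g}+\sum_j\bm{b}_j$ satisfying $\|\bm{g}\|_\infty\le c\alpha$, $\|\bm{g}\|_2^2\le c\alpha\|\bm{u}\|_1$, $\operatorname{supp}\bm{b}_j\subseteq Q_j$, $\int_{Q_j}\bm{b}_j\,dx=\bm{0}$ and $\|\bm{b}_j\|_1\le c\alpha|Q_j|$. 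By Chebyshev's inequality and Hypothesis \ref{hyp:operatore}(ii),
$$
\left|\left\{x\in\R^d\colon\|(\xi(L)\bm{g})(x)\|>\alpha/2\right\}\right|\le 4\alpha^{-2}\|\xi(L)\bm{g}\|_2^2\le c\alpha^{-1}\|\bm{u}\|_1,
$$
and the measure of a fixed dilation $\bigcup_j Q_j^*$ of the cubes is also bounded by $c\alpha^{-1}\|\bm{u}\|_1$. It then suffices to estimate the $L^1$-norm of $\sum_j\xi(L)\bm{b}_j$ over $\R^d\setminus\bigcup_j Q_j^*$, which is the core of the argument.

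Let $\ell_j$ denote the side length of $Q_j$, $y_j$ its center, and set $t_j\coloneqq\ell_j^2$. Split
$$
\xi(L)\bm{b}_j = \xi(L)e^{-t_j L}\bm{b}_j + \xi(L)(I-e^{-t_j L})\bm{b}_j.
$$
For the first summand, represent $\xi(L)e^{-t_j L}$ as an iterated contour integral: the outer contour $\gamma_\varphi$ as in \eqref{xi(L)} and an inner Laplace-type ray expressing $R(\lambda,L)$ in terms of $e^{-zL}$, with both contours chosen inside the sector of analyticity of the semigroup. Combining Proposition \ref{prop:cplx_kernel_est_gen} (complex Gaussian bound for $e^{-zL}$) with the polynomial decay of $\xi$ at $0$ and $\infty$ inherent to $\Psi(\Sigma_\mu)$ yields absolute convergence of the iterated integral and a Gaussian kernel bound $\|K_{\xi(L)e^{-t_j L}}(x,y)\|\le c\,t_j^{-d/2}\exp(-c|x-y|^2/t_j)$, with $c$ depending only on $\|\xi\|_{\infty,\mu}$ and the geometric parameters. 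Lemma \ref{lemma:kernel_est}(i) then gives $\sum_j\int_{\R^d\setminus Q_j^*}\|\xi(L)e^{-t_j L}\bm{b}_j(x)\|\,dx\le c\sum_j\|\bm{b}_j\|_1\le c\|\bm{u}\|_1$. For the second summand, use the identity $\xi(L)(I-e^{-t_j L})\bm{b}_j=\int_0^{t_j}\xi(L)L e^{-sL}\bm{b}_j\,ds$; the kernel of $\xi(L)L e^{-sL}$ admits an analogous Gaussian bound with an extra factor $s^{-1}$ thanks to Proposition \ref{prop:stime:nucleo:He-tH}, and the cancellation $\int\bm{b}_j\,dx=\bm{0}$ permits subtracting the kernel evaluated at $y=y_j$, producing a Lipschitz-in-$y$ gain of order $\ell_j\,s^{-1/2}$ that renders the $s$-integral on $(0,t_j)$ convergent and provides, after summation in $j$, the required bound by $c\|\bm{u}\|_1$.

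Finally, Lemma \ref{lemma:approssimazione}, applied to $S=\xi(L)$ defined initially on the dense subspace $L^1\cap L^2$, extends the weak-type $(1,1)$ bound to all of $L^1(\R^d;\C^m)$. The main obstacle in the whole scheme is the uniform complex Gaussian bound on the kernel of $\xi(L)e^{-tL}$ (and of $\xi(L)L e^{-sL}$): one must choose the outer contour $\gamma_\varphi$ with $\varphi\in(\pi/2-\theta,\mu)$ and the inner Laplace contour representing each $R(\lambda,L)$ so that both lie in a common subsector compatible with Proposition \ref{prop:cplx_kernel_est_gen}, and then verify absolute convergence of the resulting double integral with constants depending only on $\|\xi\|_{\infty,\mu}$. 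Once that kernel bound is secured, the Calder\'on-Zygmund geometry together with Lemma \ref{lemma:kernel_est}, Marcinkiewicz interpolation, and duality close the argument.
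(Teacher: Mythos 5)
Your global scheme (weak type $(1,1)$ for $\Psi(\Sigma_\mu)$, approximation to $\textup{H}^\infty$, Marcinkiewicz, duality) coincides with the paper's, and the splitting $\bm{b}_j = e^{-t_jL}\bm{b}_j + (I-e^{-t_jL})\bm{b}_j$ is the same. But the way you estimate the ``bad'' part $(I-e^{-t_jL})\bm{b}_j$ relies on an ingredient that the hypotheses do not supply. You invoke the mean-zero property $\int_{Q_j}\bm{b}_j\,dx=\bm{0}$ and then ``subtract the kernel at $y=y_j$, producing a Lipschitz-in-$y$ gain of order $\ell_j s^{-1/2}$.'' That step requires a bound on the $y$-increment $K_s(x,y)-K_s(x,y_j)$ of the kernel of $\xi(L)Le^{-sL}$ (or at least of $Le^{-sL}$). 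Hypotheses~\ref{hyp:operatore} and Propositions~\ref{prop:cplx_kernel_est_gen}, \ref{prop:stime:nucleo:He-tH} give only pointwise Gaussian size estimates $|p^k_{ij}(z,x,y)|\le c(\operatorname{Re}z)^{-d/2-k}e^{-c|x-y|^2/\operatorname{Re}z}$; no spatial (Hölder or Lipschitz) regularity of $p_{ij}$ in $y$ is assumed or proved, and it cannot be derived from time-derivative bounds since $L$ is an abstract operator with merely bounded measurable coefficients in the application. This is precisely the obstruction the Duong--Robinson/Duong--McIntosh ``non-integral'' method is designed to bypass: the paper never uses any mean-zero cancellation, and indeed the Calder\'on--Zygmund decomposition it quotes (Coifman--Weiss, Corollary 2.3) does not even produce mean-zero bad pieces. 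Instead, the paper factors $\xi_j(z)=\xi(z)(1-e^{-r_j^2z})$, which vanishes to first order at $z=0$, expresses $\xi_j(L)$ via a Laplace-type contour, and exploits the extra factor $r_j^2 s$ coming from $|1-e^{r_j^2 s e^{\pm i\omega}}|$ together with Lemma~\ref{lemma:kernel_est}(i) applied at $\gamma=2$. The cancellation is therefore analytic at the origin of the symbol, not geometric at the center of the cube.

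There is a second, lesser issue in your treatment of the ``good'' part. You assert a pointwise Gaussian kernel bound for $\xi(L)e^{-t_jL}$ with constant depending only on $\|\xi\|_{\infty,\mu}$. Writing $\xi(L)e^{-t_jL}=\zeta(L)$ with $\zeta(z)=\xi(z)e^{-t_jz}$ and pushing through the iterated contour, the inner inverse-Laplace integral converges only thanks to the $\Psi$-class decay of $\xi$, and the constants that come out a priori depend on the exponent and constant in that decay, not just on $\|\xi\|_{\infty,\mu}$ (also, $\zeta\notin\textup{H}^\infty(\Sigma_\mu)$ when $\mu\ge\pi/2$, so the contour must be retracted). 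You flag this as ``the main obstacle'' but do not indicate how to obtain the needed uniformity. The paper avoids this altogether: it shows directly that $\bm{h}_1=\sum_j e^{-r_j^2L}(f_j\bm{e}_k)$ satisfies $\|\bm{h}_1\|_2\le c\,\alpha^{1/2}\|u_k\|_1^{1/2}$ using Lemma~\ref{lemma:kernel_est}(ii)--(iii) and the Hardy--Littlewood maximal function, and only then applies the $L^2$ $\textup{H}^\infty$-bound of Hypothesis~\ref{hyp:operatore}(ii) to $\bm{h}_1$, so the constant is automatically $\|\xi\|_{\infty,\mu}$. If you want to keep your scheme, you should replace your treatment of both pieces by the paper's: the $L^2$/maximal-function argument for $\bm{h}_1$ and the $\xi_j(z)=\xi(z)(1-e^{-r_j^2z})$ factorization for $\bm{h}_2$.
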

\begin{proof}

Throughout the proof, $c$ denotes a positive constant, which may vary from line to line but is independent of the functions that we consider, of $x\in\R^d$ and of $\alpha\in (0,\infty)$. Moreover, for every $\psi\in [0,\pi]$, we denote by $\gamma_\psi^{\pm}$ the curve in the complex plane defined by
$\gamma_\psi^{\pm}(s)=
se^{\pm i\psi}$ for every $s\in (0,\infty)$.

We split the proof into several steps. First, we prove that the operator $\xi(L)$ is of weak type $(1,1)$ when $\xi\in \Psi(\Sigma_\mu)$ for some $\mu\in \left(\frac\pi2-\theta,\pi\right)$. This is obtained in Steps 1 to 4.
Next in Step 5, we remove the extra condition on the behaviour of $\xi$ at the origin and at infinity, and complete the proof. 

{\em Step 1}. Let us fix $\xi\in\Psi(\Sigma_{\mu})$ and consider the operator $\xi(L)\in {\mathscr L}(L^2(\R^d;\C^m))$, which is defined by
\begin{align*}
\xi(L)=\frac{1}{2\pi i}\int_{\gamma_\psi^+}\xi(\lambda)R(\lambda,L)d\lambda-\frac{1}{2\pi i}\int_{\gamma_\psi^-}\xi(\lambda)R(\lambda,L)d\lambda,    
\end{align*}
where $\psi$ is arbitrarily fixed in $\left(\frac\pi2-\theta,\mu\right)$. 
We further fix ${\bm{u}}\in C^\infty_c(\R^d;\C^m)$, split it into the sum $\bm{u}=\sum_{k=1}^mu_k\bm{e}_k$ and note that 
% \begin{align}
% \label{decomp_xi(A)u}
% \xi(H){\bm{u}}
% = \sum_{j=1}^m\xi(H)(u_j\bm{e}_j), \qquad
% \|(\xi(H){\bm{u}})(x)\|_{\C^m}
% \leq  \sum_{j=1}^m\|(\xi(H)(u_j\bm{e}_j))(x)\|_{\C^m}.
% \end{align}
% From \eqref{decomp_xi(A)u}, we infer that, for every $\alpha>0$,
\begin{align}
\label{spezzamento_1}
|\{x\in \R^d:\|(\xi(L)\bm{u})(x)\|>\alpha\}|
\leq & \bigg|\bigg\{x\in \R^d:\sum_{k=1}^m\|(\xi(L)(u_k\bm{e}_k))(x)\|>\alpha\bigg\}\bigg | \notag \\
\leq & \sum_{k=1}^m \left|\left\{x\in \R^d:\|(\xi(L)(u_k\bm{e}_k))(x)\|>\frac\alpha m\right\}\right|
\end{align}
for every $\alpha>0$.

Now, we fix $k\in\{1,\ldots,m\}$ and consider the Calder\'on-Zygmund decomposition of $u_k$  at height $\alpha>0$ (see e.g., \cite[Corollary 2.3]{CoWe71}), i.e., we consider functions $g$, $f_i$ and balls $B(x_i,r_i)\subset\R^d$ ($i\in\N$) such that, for some positive constant $c$, which only depends on the dimension $d$,
\begin{enumerate}[\rm(i)]
\item 
$u_k=g+h$, with $h=\sum_{i\in\N}f_i$; % and, for every $i\in\N$, $\chi_i=\chi_{B(x_i,r_i)}$,
%\begin{align*}
%\eta_i=
%\begin{cases}
%\displaystyle\frac{\chi_i}{\sum_{j\in\N}\chi_j}, & \cup_{j\in\N}B(x_i,r_i), \\[1mm]
%0, & x\notin \cup_{j\in\N}B(x_i,r_i),
%\end{cases} \quad f_i=u_k\eta_i-\frac{\chi_i}{|B(x_i,r_i)|}\int_{B(x_i,r_i)}u_k\eta_idx;
%\end{align*}
%-\hskip -3mm\int_{B(x_i,r_i)}u_k\eta_idx$;
\item 
$|g(x)|\leq c\alpha$ for every $x\in \R^d$;
\item 
for every $i\in\N$, the function $f_i$ has support contained in $B(x_i,r_i)$;
\item there exists $N\in\N$ such that each $x\in \R^d$ is contained in at most $N$ balls of $\{B(x_i,r_i): i\in\N\}$;
\item 
$\|f_i\|_1\leq c\alpha r_i^d$ for every $i\in\N$;
\item 
$\sum_{i\in\N}r_i^d\leq c\alpha^{-1}\|u_k\|_1$.
\end{enumerate}

Clearly, $h$ and $g$ can be chosen with compact support and, from (v) and (vi), it follows that $\sum_{i\in\N}\|f_i\|_1<\infty$, which implies that
the function $h$ belongs to $L^1(\R^d;\C)$ and $\|h\|_1\leq c\|u_k\|_1$ and, in particular, $\|g\|_1\leq c\|u_k\|_1$. 
% Now we split the function into the sum $h=h_1+h_2$, where
% \begin{align*}
% h_1=\sum_{i=1}^{\infty}(e^{-r_i^2L}(f_i\bm{e}_k))_k, \qquad\;\, h_2=\sum_{i=1}^{\infty}(f_i-(e^{-r_i^2L}(f_i\bm{e}_k))_k).
% \end{align*}
% Note that $h_1$ and consequently $h_2$ are well-defined. Indeed, from \eqref{stime_nucleo_cpl_finale_2} we infer that
% \begin{eqnarray*}
% \int_{\R^d}\sum_{i=1}^n\|e^{-r_i^2L}(f_i\bm{e}_k)\|dx
% \le c\sum_{i=1}^n\|f_i\|_1
% \le c\sum_{i=1}^{\infty}\|f_i\|_1,\qquad\;\,n\in\N,
% \end{eqnarray*}
% so that the sequence
% $\sum_{i=1}^n\|e^{-r_i^2L}(f_i\bm{e}_k)\|$ is increasing and bounded in $L^1$-norm and, consequently, it converges almost everywhere in $\R^d$. Moreover, the function $h$ also belongs to $L^2(\R^d,\C)$, since, due to the definition of the functions $f_i$ and property (iv), it follows that
% \begin{equation}
% \int_{\R^d}|h|^2dx
% \le N\sum_{i=1}^{\infty}\int_{\R^d}|f_i|^2dx
% \le N\sum_{i=1}^{\infty}\|f_i\|_{\infty}\int_{\R^d}|f_i|dx
% \le 2N\|u_k\|_{\infty}\sum_{i=1}^{\infty}\int_{\R^d}|f_i|dx<\infty.
% \end{equation}

Let us split 
\begin{align*}
u_k\bm{e}_k
=g\bm{e}_k+\sum_{i\in\N}f_i\bm{e}_k=
 g\bm{e}_k+\sum_{i=1}^{\infty}e^{-r_i^2L}(f_i\bm{e}_k)+\sum_{i=1}^{\infty}(I-e^{-r_i^2L})(f_i\bm{e}_k)\eqqcolon\bm {g}+\bm{h}_1+\bm{h}_2
\end{align*}
and observe that $\bm{g}$ has compact support. Moreover,  $\bm{h}_1$ (and, by difference, $\bm{h}_2$) belongs to $L^2(\R^d;\C^m)$. We give it for granted for the moment and
prove the claim in Step 2.
%, showing that function $\bm{h}_1$ belongs to $L^2(\R^d;\C^m)$. By difference $\bm{h}_2$ belongs to $L^2(\R^d;\C^m)$ as well.

The above decomposition implies that
\begin{align}
\label{spezzamento_2}
\left|\left\{x\in \R^d:\|(\xi(L)(u_k\bm{e}_k))(x)\|>\frac\alpha m\right\}\right|
\leq & \left|\left\{x\in \R^d:\|(\xi(L)\bm{g})(x)\|>\frac\alpha{3 m}\right\}\right| \notag \\
& + \left|\left\{x\in \R^d:\|(\xi(L)\bm{h}_1)(x)\|>\frac\alpha{3 m}\right\}\right| \notag \\
& + \left|\left\{x\in \R^d:\|(\xi(L)\bm{h}_2)(x)\|>\frac\alpha{3 m}\right\}\right|\notag\\
\eqqcolon & {\mathscr M}_{{\bm g}}+{\mathscr M}_{{\bm h}_1}+
{\mathscr M}_{{\bm h}_2}.
\end{align}
We now estimate the three terms in the last side of \eqref{spezzamento_2} separately.

{\em Step 2 (Estimate of $\mathscr{M}_{\bm{g}}$)}. Let us first assume  that $\|\xi\|_{\infty,\mu}=1$. From Chebyshev's inequality, the fact that $\|\xi(L)\|_{\mathscr L(L^2(\R^d;\C^m))}<\infty$ and (ii), we infer that
\begin{align*}
\mathscr{M}_{\bm{g}}
\leq & \frac{c}{\alpha^{2}}\int_{\R^d}\|(\xi(L)\bm{g})(x)\|^2dx 
\leq  \frac{c}{\alpha^{2}}\int_{\R^d}\|\bm{g}(x)\|^2dx
\leq \frac{c}{\alpha}\int_{\R^d}\|\bm{g}(x)\|dx \leq  \frac{c}{\alpha}\|u_k\|_1.
\end{align*}
In the general case, setting $\tilde \xi=\xi\|\xi\|_{\infty,\mu}^{-1}$, we can write
\begin{align}
\label{stima_g_finale}
\mathscr{M}_{\bm{g}}
= & \bigg |\bigg \{x\in \R^d:\|(\tilde\xi(L)\bm{g})(x)\|>\frac\alpha{3m\|\xi\|_{\infty,\mu}}\bigg \}\bigg |
\leq  \frac{c}{\alpha}\|\xi\|_{\infty,\mu}\|u_k\|_1.   \end{align}

{\em Step 3 (Estimate of $\mathscr{M}_{\bm{h}_1}$)}. Also in this case, we first consider the case when $\|\xi\|_{\infty,\mu}=1$. The same arguments as in Step 2, show that
\begin{align}
\label{stima_h1}
{\mathscr M}_{\bm{h}_1}
\leq \frac{c}{\alpha^2}\|\bm{h}_1\|_2^2.
\end{align}
From \eqref{stima_ker_smgr_gen}, Lemma \ref{lemma:kernel_est}(iii) (with $v=1$), the expression of $w$ (see \eqref{eq:def:gaussian:fun}) and properties (iii) and (v) in the Calder\'on-Zygmund decomposition of $u_k$, we infer that, for every $j\in\{1,\ldots,m\}$ and almost every $x\in\R^d$, there exist positive constants $c$ and $\tau$, independent of $i$, such that
\begin{align*}
|((e^{-r_i^2L}(f_i\bm{e}_k))(x))_j|
\leq & \int_{\R^d}|p_{jk}(r_i^2,x,y)||f_i(y)|dy 
\leq c_1r_i^{-d}\int_{\R^d}e^{-\frac{c_2|x-y|^2}{r_i^2}}|f_i(y)|dy \\
\leq & cr_i^{-d}\|f_i\|_1\sup_{y\in B(x_i,r_i)}e^{-\frac{c_2|x-y|^2}{r_i^2}} 
\leq  c\alpha(\tau r_i^2)^{-\frac{d}{2}}r_i^d\inf_{y\in B(x_i,r_i)}e^{-\frac{c_2|x-y|^2}{\tau r_i^2}} \\
\leq & c\alpha (\tau r_i^2)^{-\frac{d}{2}}\int_{\R^d}e^{-\frac{c_2|x-y|^2}{\tau r_i^2}}\chi_{B(x_i,r_i)}(y)dy.
\end{align*}
From Lemma \ref{lemma:kernel_est}(ii) we deduce that
\begin{align*}
\bigg |\int_{\R^d}\langle (e^{-r_i^2L}(f_i\bm{e}_k))(x),\bm{\varphi}(x)\rangle dx\bigg |
\leq & \sum_{j=1}^m\int_{\R^d} |((e^{-r_i^2L}(f_i\bm{e}_k))(x))_j||\varphi_j(x)|dx \\
\leq & c\alpha (\tau r_i^2)^{-\frac d2}\sum_{j=1}^m\int_{\R^d}dx\int_{\R^d}e^{-\frac{c_2|x-y|^2}{\tau r_i^2}}\chi_{B(x_i,r_i)}(y)|\varphi_j(x)|dy \\
\leq & c\alpha\sum_{j=1}^m\int_{\R^d}(M|\varphi_j|)(y)\chi_{B(x_i,r_i)}(y)dy
\end{align*}
for every $\bm{\varphi}\in L^2(\R^d;\C^m)$. By taking properties (iv) and (vi) in the above decomposition and the boundedness of the maximal function $M$ on $L^2(\R^d;\C)$ into account, it follows that
\begin{align*}
\|\bm{h}_1\|_2
= & \sup_{\|\bm{\varphi}\|_{L^2(\R^d;\C^m)}\leq1}\bigg |\int_{\R^d}\bigg\langle\sum_{i\in\N}(e^{-r_i^2 L}(f_i\bm{e}_k))(x),\bm{\varphi}(x)\bigg\rangle dx\bigg | \\
\leq & c\alpha\sup_{\|\bm{\varphi}\|_{L^2(\R^d;\C^m)}\leq1}\sum_{j=1}^m\int_{\R^d}\sum_{i\in\N}\chi_{B(x_i,r_i)}(x)(M|\varphi_j|)(x)dx \\
\leq & c\alpha \sup_{\|\bm{\varphi}\|_{L^2(\R^d;\C^m)}\leq1}\sum_{j=1}^m\|M|\varphi_j|\|_2\bigg\|\sum_{i\in\N}\chi_{B(x_i,r_i)}\bigg\|_2 \\
\leq & c\alpha\bigg(\sum_{i\in\N}|B(x_i,r_i)|\bigg )^{\frac12}
\leq c\alpha^{\frac12}\|u_k\|_1^{\frac12}.
\end{align*}
This estimate allows us to control the right-hand side of
\eqref{stima_h1} with $c\alpha^{-1}\|u_k\|_1$. Then, arguing as in \eqref{stima_g_finale}, we infer that
\begin{align}
\label{stima_h1_finale}
\mathscr{M}_{\bm{h}_1}
\leq \frac{c}{\alpha}\|\xi\|_{\infty,\mu}\|u_k\|_1
\end{align}
for a general $\xi\in\Psi(\Sigma_{\mu})$.

{\em Step 4 (Estimate of $\mathscr{M}_{\bm{h}_2}$)}. It remains to estimate the last term in the right-hand side of \eqref{spezzamento_2}. For this purpose, we observe that
\begin{align}
\mathscr{M}_{\bm{h}_2}
\leq & \sum_{i=1}^{\infty}|B(x_i,2r_i)|+\bigg|\bigg\{x\in \R^d\setminus\bigcup_{j\in\N}B(x_j,2r_j): \|(\xi(L)\bm{h}_2)(x)\|_1>\frac\alpha{3 m}\bigg\}\bigg|\notag \\
\leq & \frac{c}{\alpha}\|u_k\|_1
+\sum_{\ell=1}^m \bigg|\bigg\{x\in \R^d\setminus\bigcup_{j\in\N}B(x_j,2r_j):|((\xi(L)\bm{h}_2)(x))_{\ell}|>\frac\alpha{3 m^{3/2}}\bigg\}\bigg|,
% +\bigg|\bigg\{x\in \R^d\setminus\bigcup_{j\in\N}B(x_j,2r_j): \|(\xi(H)\bm{h}_2)(x)\|_{\C^m}>\frac\alpha{3 m}\bigg\}\bigg|,
\label{stima_h2-iniziale}
\end{align}
where we have used the doubling property of the Lebesgue measure and property (vi) in the above decomposition of $u_k$. 

Let us estimate the second term in the last side of \eqref{stima_h2-iniziale}. To this aim, we recall that, for every $\eta\in\left(0,\theta\right)$, $(e^{-tL})_{t\geq0}$ extends to an analytic semigroup in the sector $\Sigma_\eta$.  We set 
\begin{align*}
\omega\coloneqq
\begin{cases}
\frac{\pi+\theta}{2}, & \mu\in\left[\frac\pi2+\theta,\pi\right), \\[1mm]
\frac{3\pi}{4}+\frac{\theta-\mu}{2}, & \mu\in\left(\frac\pi2-\theta,\frac\pi2+\theta\right),
\end{cases}
\end{align*}
and $\xi_j(z)\coloneqq\xi(z)(1-e^{-r_j^2z})$ for every $z\in \Sigma_{\pi-\omega}$ and every $j\in\N$. Since $\pi-\omega\in (\frac\pi2-\theta,\mu)$,  it follows that $\xi_j$ belongs to $\Psi(\Sigma_{\pi-\omega})$ and
\begin{align}
\xi(L)\bm{h}_2
= & \sum_{j=1}^{\infty}\xi(L)(I-e^{-r_j^2L})(f_j\bm{e}_k) \notag \\
= & -\frac{1}{2\pi i}\sum_{j=1}^{\infty}\bigg (\int_{\gamma_{\omega}^+}\xi_j(-\lambda)R(\lambda,-L)d\lambda\bigg )(f_j\bm{e}_k)\notag\\
&+\frac{1}{2\pi i}\sum_{j=1}^{\infty}\bigg (\int_{\gamma_{\omega}^-}\xi_j(-\lambda)R(\lambda,-L)d\lambda\bigg )(f_j\bm{e}_k)\notag\\
=&\sum_{j=1}^{\infty}\xi_j(L)(f_j\bm{e}_k).
\label{rappresentazione_xi_jA}
\end{align}

We claim that, if we set
\begin{align*}
\eta\coloneqq
\begin{cases}
\frac23\theta, & \mu\in\left[\frac\pi2+\theta,\pi\right), \\[1mm]
\frac{\pi}{8}+\frac{3\theta-\mu}{4}, & \mu\in\left(\frac\pi2-\theta,\frac\pi2+\theta\right),
\end{cases}
\end{align*}
then
\begin{align}
\label{rappresentazione_risolvente}
R(\lambda,-L)=\int_{\gamma_{\eta}^-} e^{-\lambda z}e^{-z L}dz, \quad\;\,\lambda\in\C \textit{ such that }\, {\rm Re}(\lambda)<0,\,{\rm arg}(\lambda)=\omega.    
\end{align}
We note that the function $\zeta\mapsto \int_{\gamma_{\eta}^-} e^{-\zeta z}e^{-z L}dz$ is analytic in the sector $\{\zeta\in\C: |{\rm arg}(\zeta)-\eta|<\frac{\pi}{2}\}$. Moreover, when $\zeta$ is taken in $(0,\infty)$, a simple computation shows that
\begin{equation*} 
\int_{\gamma_{\eta}^-} e^{-\zeta z}e^{-z L}dz=\int_0^{\infty}e^{-\zeta z}e^{-z L}dz.
\end{equation*}
By analyticity, formula \eqref{rappresentazione_risolvente} follows immediately.
In the same way we can show that
\begin{align}
R(\lambda,-L)=\int_{\gamma_{\eta}^+} e^{-\lambda z}e^{-z L}dz, \quad\;\,\lambda\in\C \textit{ such that }\, {\rm Re}(\lambda)<0,\,  {\rm arg}(\lambda)=-\omega. \label{rappresentazione_risolvente-1}
\end{align}

From \eqref{rappresentazione_xi_jA}, \eqref{rappresentazione_risolvente} and \eqref{rappresentazione_risolvente-1}, it follows that, for every $j\in\N$,
\begin{align*}
&\frac{1}{2\pi i}\int_{\gamma_{\omega}^{\pm}}((\xi_j(-\lambda)R(\lambda,-L)(f_j\bm{e}_k))(x))_{\ell}d\lambda\\
=& \frac{1}{2\pi i}
\int_{\gamma_{\eta}^{\mp}}dz\int_{\gamma_{\omega}^{\pm}}e^{-\lambda z}\xi_j(-\lambda) ((e^{-z L}(f_j\bm{e}_k))(x))_{\ell}d\lambda
\end{align*}
for every $\ell\in\{1,\ldots,m\}$ and almost every $x\in \R^d\setminus B(x_j,2r_j)$. Hence, 
\begin{align}
&{\mathscr I}_{j,\ell}^{\pm}\coloneqq \int_{\R^d\setminus B(x_j,2r_j)}\bigg|\frac{1}{2\pi i}\int_{\gamma_{\omega}^{\pm}}((\xi_j(-\lambda)R(\lambda,-L)(f_j\bm{e}_k))(x))_{\ell}d\lambda\bigg|
dx \label{stima_integrale_compl}\\ %\notag \\
 \leq & c\|\xi\|_{\infty,\mu}\int_0^\infty d\sigma\int_0^\infty e^{-s\sigma\Theta}|1-e^{r_j^2se^{\pm i\omega}}|ds\int_{\R^d\setminus B(x_j,2r_j)}|((e^{-\sigma e^{\mp i\eta}L}(f_j\bm{e}_k))(x))_{\ell}|dx,
\notag%\label{stima_integrale_compl}
\end{align}
where, to ease the notation, we set $\Theta\coloneqq\cos(\omega-\eta)$, which is positive by the definition of $\omega$ and $\eta$.
Since the support of $f_j$ is contained in $B(x_j,r_j)$, from Lemma \ref{lemma:kernel_est}(i), with $\gamma=2$, and Proposition \ref{prop:cplx_kernel_est_gen}, with $\beta=\frac{\eta+\theta}{2}\in(\eta,\theta)$, we infer that
\begin{align*}
&\int_{\R^d\setminus B(x_j,2r_j)}|((e^{-\sigma e^{\mp i\eta}L}(f_j\bm{e}_k))(x))_{\ell}|dx\\
\leq & \int_{B(x_j,r_j)}|f_j(y)|dy\int_{\R^d\setminus B(y,r_j)}|p_{\ell k}(\sigma e^{\mp i\eta},x,y)|dx\\
\leq & c(\sigma\cos(\eta))^{-\frac{d}{2}}\int_{B(x_j,r_j)}|f_j(y)|dy\int_{\R^d\setminus B(y,r_j)}e^{-c\frac{|x-y|^2}{\sigma\cos(\eta)}}dx\\
\leq & c\frac{\sigma^2}{(\sigma+r_j^2)^2}\|f_j\|_1.§
\end{align*}
Replacing in \eqref{stima_integrale_compl}, we infer that
\begin{align*}
{\mathscr I}_{j,\ell}^{\pm}\leq  c\|\xi\|_{\infty,\mu}\|f_j\|_1\int_0^\infty
\frac{\sigma^2}{(\sigma+r_j^2)^2}d\sigma\int_0^\infty e^{-s\sigma \Theta}|1-e^{r_j^2se^{\mp i\omega}}|ds.
\end{align*}
Let us notice that $|1-e^{r_j^2se^{\mp i\omega}}|\leq 2$ for every $s>0$ and $|1-e^{r_j^2se^{\mp i\omega}}|\leq sr_j^2$ if $sr_j^2\le 1$. Taking this remark into account, we can go further in the estimate of the term ${\mathscr I}^{\pm}_{j,\ell}$ and write 
\begin{align*}
{\mathscr I}_{j,\ell}^{\pm}\leq & c\|\xi\|_{\infty,\mu}\|f_j\|_1\bigg (\int_0^\infty\! \frac{\sigma^2}{(\sigma+r_j^2 )^2}d\sigma\!\int_{r_j^{-2}}^\infty e^{-s\sigma\Theta}ds\hskip -1pt+\hskip -1pt \int_0^\infty\! \frac{\sigma^2}{(\sigma+r_j^2)^2}d\sigma\int_0^{r_j^{-2}} r_j^2se^{-s\sigma \Theta}ds\bigg) \\
= &c\Theta^{-2}\|\xi\|_{\infty,\mu}\|f_j\|_1
\int_0^{\infty}
\frac{r_j^2}{(\sigma+r_j^2)^2}(1-e^{-\sigma r_j^{-2}\Theta })d\sigma\\
=&c\Theta^{-2}\|\xi\|_{\infty,\mu}\|f_j\|_1\int_0^{\infty}
\frac{1}{(1+\sigma)^2}(1-e^{-\sigma\Theta })d\sigma
\end{align*}
for every $\ell\in\{1,\ldots,m\}$. 
Hence, we conclude first that $\mathscr I_{j,\ell}^{\pm}\leq c\|\xi\|_{\infty,\mu}\|f_j\|_1$
for some constant $c$, which is independent of $j$, $k$ and $\ell$, and then, from \eqref{rappresentazione_xi_jA}, that
% Since
% \begin{align*}
% {\mathscr I}_{j,\ell}^{^-}:=& \int_{\R^d\setminus B(x_j,2r_j)}\bigg|\frac{1}{2\pi i}\int_{\gamma_{\omega}^{^-}}((\xi_j(-\lambda)R(\lambda,-L)(f_j\bm{e}_k))(x))_{\ell}d\lambda\bigg|
% dx \\
% \leq & c\|\xi\|_{\infty,\mu}\int_0^\infty d\sigma\int_0^\infty e^{-s\sigma\cos(\eta-\omega)}|1-e^{r_j^2se^{-i\omega}}|ds\int_{\R^d\setminus B(x_j,2r_j)}|((e^{-\sigma e^{i\eta}L}(f_j\bm{e}_k))(x))_{\ell}|dx,
% \end{align*}
% arguing as above we deduce that $\mathscr I_{j,\ell}^-\leq c\|\xi\|_{\infty,\mu}\|f_j\|_1$ for some positive constant $c$, which depends on $\mu$ but is independent of $j$, $k$ and $\ell$. Hence, 
\begin{equation}
\label{stima_h2_2}
\int_{\R^d\setminus B(x_j,2r_j)}|((\xi_j(L)(f_j\bm{e}_k)(x))_{\ell}|dx\leq c\|\xi\|_{\infty,\mu}\|f_j\|_{1}, \qquad j\in\N, \;\, \ell\in\{1,\ldots,m\}   
\end{equation}
for some constant $c$, which is independent of $j$ and $k$.

From the properties (v), (vi) above and \eqref{stima_h2_2}, we conclude that 
\begin{align}
& \sum_{\ell=1}^m \bigg|\bigg\{x\in \R^d\setminus\bigcup_{i\in\N}B(x_i,2r_i):|((\xi(L)\bm{h}_2)(x))_{\ell}|>\frac\alpha{3 m^{3/2}}\bigg\}\bigg| \notag \\
\leq & \frac{c}{\alpha}\sum_{\ell=1}^m\int_{\R^d\setminus \bigcup_{i\in\N}B(x_i,2r_i)}|((\xi(L)\bm{h}_2)(x))_{\ell}|dx \notag \\
\leq & \frac{c}{\alpha}\sum_{\ell=1}^m\int_{\R^d\setminus\bigcup_{i\in\N} B(x_i,2r_i)}\bigg |\sum_{j=1}^{\infty}((\xi_j(L)(f_j\bm{e}_k))(x))_{\ell}\bigg |dx \notag \\
\leq & \frac{c}{\alpha}\sum_{\ell=1}^m\sum_{j=1}^{\infty}\int_{\R^d\setminus B(x_j,2r_j)}|((\xi_j(L)(f_j\bm{e}_k))(x))_{\ell}|dx \notag \\
\leq &  \frac{c}{\alpha}\|\xi\|_{\infty,\mu}\sum_{j=1}^{\infty}\|f_j\|_1 \notag \le\frac{c}{\alpha}\|\xi\|_{\infty,\mu}\|u_k\|_1.
%\label{stima_h2_finale}
\end{align}
By assuming that $\|\xi\|_{\infty,\mu}=1$, from \eqref{stima_h2-iniziale} we obtain that
\begin{align*}
{\mathscr M}_{\bm{h}_2}\le 
\frac{c}{\alpha}
\|u_k\|_1
%\label{stima-ultimo-termine}
\end{align*}
and, arguing as for the estimates of ${\mathscr M}_{\bm{g}}$ and ${\mathscr M}_{\bm{h}_1}$, we infer that
\begin{align}
\label{stima_h2_finale:gen}
\mathscr{M}_{\bm{h}_2}
\leq \frac{c}{\alpha}\|\xi\|_{\infty,\mu}\|u_k\|_1
\end{align}
for every $\xi\in\Psi(\Sigma_{\mu})$, where $c$ is a positive constant which only depends on $d$, $m$, $\mu$ and $\theta$ (in particular, it does not depend on $\xi$, $\alpha$ and $\uu$).

{\em Step 5}. Replacing \eqref{stima_g_finale}, \eqref{stima_h1_finale} and \eqref{stima_h2_finale:gen} in \eqref{spezzamento_2}, we infer that 
\begin{align*}
%\label{stima_u1_finale}
\left|\left\{x\in \R^d:\|(\xi(L)(u_k\bm{e}_k))(x)\|>\frac\alpha m\right\}\right|
\leq\frac{c}{\alpha}\|\xi\|_{\infty,\mu}\|u_k\|_1, \qquad k\in\{1,\ldots,m\},
\end{align*}
which, together with \eqref{spezzamento_1}, yields to 
\begin{align}
\label{stima_finale}
|\{x\in \R^d:\|(\xi(L)\bm{u})(x)\|>\alpha\}|
\leq \frac{c}{\alpha}\|\xi\|_{\infty,\mu}\|\bm{u}\|_1
\end{align}
for every $\xi\in\Psi(\Sigma_{\mu})$,  $\uu\in C_c^\infty(\R^d;\C^m)$ and $\alpha>0$, where $c$ is a positive constant which only depends on $d$, $m$, $\mu$ and $\theta$.

Next, we extend \eqref{stima_finale} to every function in $\textup{H}^\infty(\Sigma_\mu)$. For this purpose, we fix $\xi\in \textup{H}^\infty(\Sigma_\mu)$ and consider the sequence $(\xi_n)\subset \Psi(\Sigma_\mu)$ defined as
\begin{align*}
\xi_n(z)\coloneqq\xi(z)\frac{z^\frac{1}{n}}{(1+z)^{\frac{2}{n}}}, \qquad z\in \Sigma_\mu, \;\, n\in\N.    
\end{align*}
From \cite[Theorem D]{AlDuMcIn96}, it follows that the sequence $(\xi_n(L)\bm{u})$ converges to $\xi(L)\bm{u}$ in $L^2(\R^d;\C^m)$ for every $\bm{u}\in L^2(\R^d;\C^m)$. Hence, there exists a subsequence $(\xi_{n_k}(L)\bm{u})$ which converges to $\xi(L)\bm{u}$ almost everywhere in $\R^d$. Writing \eqref{stima_finale} with $\xi$ being replaced by $\xi_{n_k}$ and observing that $\|\xi_{n_k}\|_{\infty,\mu}\le c\|\xi\|_{\infty,\mu}$ for every $k\in\N$ and some positive constant $c$, independent of $\xi$ and $k$, we deduce that
\begin{align}
|\{x\in \R^d: \|(\xi(L)\bm{u})(x)\|>\alpha\}|\le & \lim_{k\to\infty}\bigg |
\bigcap_{h\ge k}\{x\in\R^d: \|(\xi_{n_h}(L)\bm{u})(x)\|>\alpha\}\bigg |\notag\\
\le &\frac{c}{\alpha}\lim_{k\to\infty}\|\xi_{n_k}\|_{\infty,\mu}\|\bm{u}\|_1\notag\\
\leq& \frac{c}{\alpha}\|\xi\|_{\infty,\mu}\|\bm{u}\|_1
\label{stima_finale_Hinf_L2}
\end{align}
for every $\alpha>0$.
Choosing $\mathcal D=C^\infty_c(\R^d;\C^m)$ and $S=\xi(L)$ in Lemma \ref{lemma:approssimazione}, we conclude that $\xi(L)$ extends to an operator on $L^1(\R^d;\C^m)$ of weak type $(1,1)$.

Now we fix $j,k\in\{1,\ldots,m\}$. Since $|((\xi(L)(f\bm{e}_k))(x))_j|\leq \|(\xi(L)(f\bm{e}_k))(x)\|$ for every 
$f\in L^1(\R^d;\C)$ and
almost every $x\in\R^d$, from \eqref{stima_finale_Hinf_L2} it follows that 
\begin{align*}
|\{x\in \R^d:|((\xi(L)(f\bm{e}_k))(x))_j|>\alpha\}|
\leq \frac{c}{\alpha}\|\xi\|_{\infty,\mu}\|f\|_1, \qquad\;\, \alpha>0,\;\,j,k\in\{1,\ldots,m\}.    
\end{align*}
Hence, the operator $L_{j,k}$, defined by $L_{j,k}(f) \coloneqq (\xi(L)(f\bm{e}_k))_j$ for every $f\in L^1(\R^d;\C)$, is of weak type $(1,1)$ and it also belongs to $\mathscr L(L^2(\R^d;\C))$ since
$\xi(L)\in \mathscr L(L^2(\R^d;\C^m))$.
From the Marcinkiewicz interpolation theorem, it follows that $L_{j,k}\in \mathscr L(L^p(\R^d;\C))$ for every $p\in(1,2)$ and
\begin{align*}
\|\xi(L)\bm{u}\|_p
\leq & \sum_{k=1}^m\|\xi(L)(u_k\bm{e}_k)\|_p
\leq \sum_{k=1}^m\bigg(\sum_{j=1}^m\|(\xi(L)(u_k\bm{e}_k))_j\|_p^p\bigg )^{\frac1p} \\
\leq & c\|\xi\|_{\infty,\mu}\sum_{k=1}^m\|u_k\|_p
\leq  c\|\xi\|_{\infty,\mu}\|\bm{u}\|_p
\end{align*}
for every $\bm{u}\in C^\infty_c(\R^d;\C^m)$ and some positive constant $c$, depending on $p$.
By density, we conclude that $\xi(L)\in\mathscr L(L^p(\R^d;\C^m))$ and $\|\xi(L)\|_{\mathscr L(L^p(\R^d;\C^m))}\leq c\|\xi\|_{\infty,\mu}$.

Since the function $\widetilde\xi$, defined by
$\widetilde \xi(\lambda)=\overline{\xi(\overline\lambda)}$ for every $\lambda\in \Sigma_{\mu}$ belongs to
$\textup{H}^{\infty}(\Sigma_{\mu})$ and $(\xi(L))^*=\widetilde \xi(L^*)$, from Remark \ref{rmk:adjoint} and the previous steps we infer that
$(\xi(L))^*\in \mathscr L(L^p(\R^d;\C^m))$ for every $p\in(1,2)$. 

If we now take $p>2$, then $p'\in (1,2)$ and
\begin{align*}
\left|\int_{\R^d}\langle (\xi(L)\bm{f})(x),\bm{g}(x)\rangle dx \right|
= & \left|\int_{\R^d}\langle \bm{f}(x),(\xi(L))^*\bm{g})(x)\rangle dx \right|
\leq  c\|\xi\|_{\infty,\mu}\|\bm{f}\|_p\|\bm{g}\|_{p'}
\end{align*}
for every $\bm{f}, \bm{g}\in C^\infty_c(\R^d;\C^m)$. By density, we conclude that $\xi(L)\in\mathscr L(L^p(\R^d;\C^m))$ and $\|\xi(L)\|_{\mathscr L(L^p(\R^d;\C^m))}\leq c\|\xi\|_{\infty,\mu}$.
\end{proof}

\section{Schr\"odinger type operators}
We are going to apply Theorem \ref{thm:Hinfinity_calculus_Lp:gen} to show the boundedness of the $\textup{H}^\infty$-calculus on $L^p(\R^d;\C^m)$ for the elliptic operator $\mathscr{L}$ acting on vector-valued smooth functions $\bm{u}$ as
\begin{equation*}
\mathscr{L}\bm{u}=-\operatorname{div}(Q\nabla \bm{u})+V\bm{u},  
\end{equation*}
where the coefficients satisfy the following conditions.

\begin{hyps}\label{hyp_0}
$Q=(q_{ij})_{1\le i,j\le d}$ and $V=(v_{hk})_{1\le h,k\le m}$ are real matrix-valued functions such that
\begin{enumerate}[\rm (i)]
\item
$q_{ij}=q_{ji}\in L^\infty(\R^d;\R)$ for each $i,j\in\{1,\ldots,d\}$, and there exists a positive constant $\nu$ such that 
\begin{equation*}
\langle Q(x)\xi,\xi\rangle \ge \nu |\xi|^2,\qquad\;\,\xi\in \R^d,
\end{equation*}
for almost every $x\in\R^d$;
\item
$v_{hk}=v_{kh}\in L^1_{\rm loc}(\R^d;\R)$ for all $h,k\in\{1,\ldots,m\}$ and
\begin{equation}\label{eq:accretivity:V}
    \langle V(x)\zeta ,\zeta\rangle \ge 0,\qquad\;\,\zeta\in \R^m,
\end{equation}
for almost every $x\in\R^d$.
\end{enumerate}
\end{hyps}
By \cite{Maichine19} it follows that $\mathscr{L}$ admits a realization $L$ on $L^2(\R^d;\C^m)$ which is associated to the bilinear form
\begin{equation*}
\mathfrak{a}(\bm{u},\bm{v})=\sum_{i=1}^m\int_{\R^d}\langle Q(x)\nabla u_i(x),\nabla v_i(x)\rangle dx+\int_{\R^d}\langle V(x) \bm{u}(x),\bm{v}(x) \rangle dx, \qquad\;\, \bm{u},\bm{v}\in D(\mathfrak{a}),    
\end{equation*}
where
\begin{equation*}
  D(\mathfrak{a})\coloneqq\{\bm{f}\in H^1(\R^d;\C^m): V^{\frac{1}{2}}\bm{f}\in L^2(\R^d;\C^m)\} 
\end{equation*}
is endowed with the norm $\|\uu\|_{D(\mathfrak{a})}^2
=\|\uu\|_{L^2(\R^d;\C^m)}^2+\||\nabla\uu|\|_{L^2(\R^d;\C^m)}^2+\|V^{1/2}\uu\|_{L^2(\R^d;\C^m)}^2$.

Since $\mathfrak{a}$ is a densely defined, accretive, continuous and closed bilinear form, the operator $-L$ generates a $C_0$-semigroup of contractions $(e^{-tL})_{t\ge 0}$ on $L^2(\R^d;\C^m)$ which is bounded and analytic in the open right-halfplane of $\C$. This semigroup extends to a strongly continuous and analytic semigroup on $L^p(\R^d;\C^m)$ for every $p\in(1,\infty)$ (see, \cite[Theorem 3.5]{Maichine19}). Note that $(e^{-tL})_{t\ge 0}$ is not necessary a positive semigroup. It is positive if and only if the off-diagonal entries of $V$ are nonpositive, see \cite[Proposition 4.4]{KLMR19}.

In particular, since $\mathfrak{a}$ is accretive and symmetric, the self-adjoint operator $L$ is sectorial of angle $0$ and admits a bounded $\textup{H}^\infty(\Sigma_\mu)$-calculus in $L^2(\R^d;\C^m)$ for every $\mu\in\left(0,\pi\right)$.

In order to show that $(e^{-tL})_{t\ge 0}$ has a matrix kernel, which satisfies Gaussian estimates, we need a domination argument that can be obtained by the abstract result \cite[Theorem 2.30]{Ou05}, which applies provided we establish the following lemma. 

\begin{lemma}\label{lem:dominazione}
The domain $D(\mathfrak{a})$ of the form $\mathfrak{a}$, associated to the operator $L$, satisfies the following properties:
\begin{enumerate}[\rm(a)]
\item
${\uu}\in D(\mathfrak{a})$ implies $\|\uu\|\in H^1(\R^d;\C)$,
\item
${\uu}\in D(\mathfrak{a}),\ f\in H^1(\R^d;\C)$ with $|f|\le\|\uu\|$ implies $|f|{\rm sign}(\uu)\in D(\mathfrak{a})$.
\end{enumerate}
Moreover, for every $(\uu,f)\in D(\mathfrak{a})\times H^1(\R^d;\C)$ such that $|f|\le\|\uu\|$ it holds that
\begin{equation}
\label{eq:form:inequality}
\mathfrak{a}({\uu},|f|\,{\rm sign}(\uu)) \ge \mathfrak{b}(\|\uu\|,|f|),
\end{equation}
where ${\rm sign}(\uu) = \dfrac{\uu}{\|\uu\|}\chi_{\{\uu\ne \bm{0}\}}$ and $\mathfrak{b}$ is the form defined on $H^1(\R^d;\C)\times H^1(\R^d;\C)$ by
\begin{equation*}
\mathfrak{b}(u,v)=\int_{\R^d}\langle  Q(x)\nabla u(x),\nabla v(x)\rangle\,dx,\qquad u,v\in	H^1(\R^d;\C).
\end{equation*}
\end{lemma}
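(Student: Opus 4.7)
\emph{Proof plan.} I would prove (a), (b), and the form inequality in sequence, using the single Lipschitz regularization $w_\eps\coloneqq\sqrt{\|\uu\|^2+\eps^2}$ throughout, plus a pointwise algebraic manipulation for the last step.

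For (a), the map $\bm z\mapsto\sqrt{\|\bm z\|^2+\eps^2}-\eps$ on $\C^m$ is globally Lipschitz and vanishes at the origin, so $w_\eps-\eps\in H^1(\R^d;\R)$, with the chain rule giving
\[
\D(w_\eps-\eps)=\frac{1}{w_\eps}{\rm Re}\sum_{i=1}^m\ov{u_i}\D u_i.
\]
Cauchy-Schwarz yields the $\eps$-uniform bound $|\D w_\eps|\le|\D\uu|$, while $w_\eps-\eps\to\|\uu\|$ pointwise and in $L^2(\R^d;\R)$ by dominated convergence with dominant $\|\uu\|$. Weak $H^1$-compactness identifies the limit, showing $\|\uu\|\in H^1(\R^d;\C)$ with the useful pointwise bound $|\D\|\uu\||\le|\D\uu|$.

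For (b), I would approximate $|f|\,{\rm sign}(\uu)$ by $\f_\eps$ with components $(f_\eps)_i\coloneqq|f|u_i/w_\eps$. The pointwise inequalities $|f|/w_\eps\le 1$ and $|u_i|/w_\eps\le 1$, together with the gradient bound from (a), produce a uniform-in-$\eps$ $H^1$ estimate for $\f_\eps$. Since $|f|\le\|\uu\|$ forces $|f|=0$ a.e.\ on $\{\uu=\bm 0\}$, dominated convergence (with dominant $|f|$) gives $\f_\eps\to|f|\,{\rm sign}(\uu)$ in $L^2(\R^d;\C^m)$, and weak $H^1$-compactness identifies the limit as an $H^1$-function. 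The $V^{1/2}$-integrability is immediate from the pointwise estimate
\[
\|V^{1/2}(|f|\,{\rm sign}(\uu))\|^2=\frac{|f|^2}{\|\uu\|^2}\langle V\uu,\uu\rangle\chi_{\{\uu\ne\bm 0\}}\le\langle V\uu,\uu\rangle\in L^1(\R^d;\R),
\]
the last inclusion following from $V^{1/2}\uu\in L^2(\R^d;\C^m)$.

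The heart and main obstacle of the lemma is \eqref{eq:form:inequality}. The potential contribution to $\mathfrak{a}(\uu,|f|\,{\rm sign}(\uu))$ equals $\int_{\R^d}(|f|/\|\uu\|)\langle V\uu,\uu\rangle\chi_{\{\uu\ne\bm 0\}}\,dx\ge 0$ by \eqref{eq:accretivity:V}, and this term does not appear in $\mathfrak{b}$. For the principal part, set $\sigma_i\coloneqq{\rm sign}(\uu)_i$; on $\{\uu\ne\bm 0\}$ one has $u_i=\|\uu\|\sigma_i$ and $\sum_i|\sigma_i|^2=1$, whence ${\rm Re}\sum_i\ov{\sigma_i}\partial_k\sigma_i=0$ for every $k\in\{1,\ldots,d\}$. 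Plugging this into the expansion of $\sum_{i=1}^m\langle Q\D u_i,\D(|f|\sigma_i)\rangle$ and using the symmetry of $Q$ makes the cross terms vanish, leaving
\[
{\rm Re}\sum_{i=1}^m\langle Q\D u_i,\D(|f|\sigma_i)\rangle=\langle Q\D\|\uu\|,\D|f|\rangle+\|\uu\|\,|f|\sum_{i=1}^m{\rm Re}\langle Q\D\sigma_i,\D\ov{\sigma_i}\rangle,
\]
and the residual sum is nonnegative after splitting each $\sigma_i$ into real and imaginary parts and invoking the ellipticity of $Q$. The delicate point is that $\sigma_i$ is only defined on $\{\uu\ne\bm 0\}$ and is not a priori Sobolev on $\R^d$; to circumvent this I would perform the whole algebraic computation at the level of the approximants $\sigma_i^\eps\coloneqq u_i/w_\eps\in H^1_{\rm loc}(\R^d;\C)$, obtain a version with a controlled $\eps$-dependent remainder, and pass to the limit $\eps\to 0^+$ through dominated convergence and the uniform bounds from (a) and (b). Standard Sobolev facts on zero sets finally account for the contribution of $\{\uu=\bm 0\}$.
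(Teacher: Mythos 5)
Your plan for (a) and (b) --- the single Lipschitz regularization $w_\eps=\sqrt{\|\uu\|^2+\eps^2}$, a uniform-in-$\eps$ $H^1$ bound, and weak compactness --- is exactly the standard argument; the paper itself defers these two items to \cite[Lemma 5.10]{ALLR24}. The real content of the lemma is \eqref{eq:form:inequality}, and here your route and the paper's are algebraically equivalent but organized differently. The paper expands $\nabla(|f|\,{\rm sign}(\uu))_j$ by the product rule, uses the identity $\nabla\|\uu\|=\frac{1}{\|\uu\|}\sum_j{\rm Re}(\overline{u_j}\nabla u_j)\chi_{\{\uu\ne\bm{0}\}}$ to cancel the cross terms, and is left with the pointwise residual
$\frac{|f|}{\|\uu\|}\chi_{\{\uu\ne\bm{0}\}}\bigl(\sum_j\langle Q\nabla u_j,\nabla u_j\rangle-\langle Q\nabla\|\uu\|,\nabla\|\uu\|\rangle\bigr)$,
whose nonnegativity it proves by a direct Cauchy--Schwarz argument on the vectors $Q^{1/2}\nabla v_j$, $Q^{1/2}\nabla w_j$ with $u_j=v_j+iw_j$. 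You instead factor $u_j=\|\uu\|\sigma_j$, use ${\rm Re}\sum_j\overline{\sigma_j}\partial_k\sigma_j=0$ to kill the cross terms, and recognize the residual $\|\uu\|\,|f|\sum_j{\rm Re}\langle Q\nabla\sigma_j,\nabla\sigma_j\rangle$ as manifestly nonnegative after splitting $\sigma_j$ into real and imaginary parts. These are the same quantity: substituting $\nabla u_j=\sigma_j\nabla\|\uu\|+\|\uu\|\nabla\sigma_j$ and $\sum_j|\sigma_j|^2=1$ turns one into the other. What the paper's form buys is that $\nabla u_j$ and $\nabla\|\uu\|$ are globally $L^2$, so no regularization of $\sigma_j$ is needed in the nonnegativity step; what your form buys is that the residual is transparently a sum of nonnegative quadratic forms, at the cost of tracking $\sigma_j^\eps=u_j/w_\eps$ through the limit $\eps\to0^+$ and handling the contribution of $\{\uu=\bm{0}\}$. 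Both routes are correct; yours is a legitimate, slightly more structured alternative to the paper's Cauchy--Schwarz computation.
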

\begin{proof}
Properties (a) and (b) follow as in \cite[Lemma 5.10]{ALLR24}. 
	
We only need to show that estimate \eqref{eq:form:inequality} holds true. For this purpose, let ${\uu}\in D(\mathfrak{a})$ and $ f\in H^1(\R^d;\C)$ be such that $|f|\le\|\uu\|$. Since $V$ satisfies \eqref{eq:accretivity:V}, we can show that
\begin{align}
&\sum_{j=1}^m\langle Q\nabla u_j,\nabla(|f|{\rm sign}\thinspace(\uu))_j\rangle+\langle V{\uu},|f|{\rm sign}\thinspace(\uu)\rangle - \langle Q\nabla\|\uu\|,\nabla |f|\rangle\notag \\	
=&\bigg\langle  \frac1{\|\uu\|}\sum_{j=1}^m(u_jQ\nabla u_j)\chi_{\{\uu\ne 0\}}, \nabla |f|\bigg \rangle
+\frac{|f|}{\|\uu\|}\chi_{\{\uu\ne 0\}}\langle V{\uu},{\uu}\rangle - \langle Q\nabla\|\uu\|,\nabla |f|\rangle\notag\\ 
&+ \frac{|f|}{\|\uu\|}\chi_{\{\uu\ne \bm{0}\}} \bigg( \sum_{j=1}^m\langle Q\nabla u_j, \nabla u_j\rangle -\bigg\langle \frac{1}{\|\uu\|}\sum_{j=1}^m(u_j Q\nabla u_j)\chi_{\{\uu\ne \bm{0}\}}, \nabla\|\uu\|\bigg\rangle\bigg)\notag\\
=& \frac{|f|}{\|\uu\|}\chi_{\{\uu\ne 0\}}\langle V{\uu},{\uu}\rangle + \frac{|f|}{\|\uu\|}\chi_{\{\uu\ne \bm{0}\}} \bigg( \sum_{j=1}^m\langle Q\nabla u_j, \nabla u_j\rangle -\langle Q\nabla\|\uu\|,\nabla\|\uu\|\rangle\bigg)\notag\\
\ge&\  \frac{|f|}{\|\uu\|}\chi_{\{\uu\ne 0\}}\bigg (\sum_{j=1}^m\langle Q\nabla u_j, \nabla u_j\rangle -\langle Q\nabla \|\uu\|, \nabla\|\uu\|\rangle\bigg).
\label{roxette}
\end{align}

Note that $\nabla\|\uu\|=\frac{1}{\|\uu\|}\sum_{j=1}^m{\rm Re}(u_j\nabla u_j)\chi_{\{\uu\neq\bm{0}\}}$. Hence, setting $u_j=v_j+iw_j$ for every $j\in\{1,\ldots,m\}$, it follows that
\begin{align*}
\langle Q\nabla\|\uu\|,\nabla\|\uu\|\rangle
=&\frac{1}{\|\uu\|^2}\sum_{j,k=1}^m\langle Q\nabla v_j,\nabla v_k\rangle v_jv_k
+\frac{1}{\|\uu\|^2}\sum_{j,k=1}^m\langle Q\nabla w_j,\nabla w_k\rangle w_jw_k\\
&-\frac{2}{\|\uu\|^2}\sum_{j,k=1}^m\langle Q\nabla v_j,\nabla w_k\rangle v_jw_k\\
\le &\frac{1}{\|\uu\|^2}\bigg (\sum_{j=1}^m\|Q^{\frac{1}{2}}\nabla v_j\||v_j|\bigg )^2
+\frac{1}{\|\uu\|^2}\bigg (\sum_{j=1}^m\|Q^{\frac{1}{2}}\nabla w_j\||w_j|\bigg )^2\\
&+\frac{2}{\|\uu\|^2}\bigg (\sum_{j=1}^m\|Q^{\frac{1}{2}}\nabla v_j\||v_j|\bigg )\bigg (\sum_{j=1}^m\|Q^{\frac{1}{2}}\nabla w_j\||w_j|\bigg )\\
=&\frac{1}{\|\uu\|^2}\bigg (\sum_{j=1}^m(\|Q^{\frac{1}{2}}\nabla v_j\||v_j|+\|Q^{\frac{1}{2}}\nabla w_j\||w_j|)\bigg )^2\\
\le & \sum_{j=1}^m(\|Q^{\frac{1}{2}}\nabla v_j\|^2+\|Q^{\frac{1}{2}}\nabla w_j\|^2)\\
= & \sum_{j=1}^m\langle Q\nabla u_j,\nabla u_j\rangle,
\end{align*}
so that the last side of \eqref{roxette} is nonnegative.
Starting from \eqref{roxette} and integrating over $\R^d$ its first side, we deduce that 
\begin{equation*}
\mathfrak{a}({\uu},|f|\,{\rm sign}(\uu)) - \mathfrak{b}(\|\uu\|,|f|)\ge0  
\end{equation*}
and the proof is complete.
\end{proof}

\begin{prop}
Assume that Hypotheses $\ref{hyp_0}$ hold true. Then, for every $t>0$, there exists a family of kernels
$(p_{hk}(t,\cdot ,\cdot))_{1\le h,k\le m}\in L^{\infty}(\R^d\times \R^d,\R^{m\times m})$, such that
\begin{equation*}
    (e^{-tL}\bm{u}(x))_h=\sum_{k=1}^m\int_{\R^d} p_{hk}(t,x,y)u_k(y)dy,\qquad\;\, h\in\{1,\ldots,m\},
\end{equation*}
for every $t>0$, every $\uu\in L^2(\R^d;\C^m)$ and almost every $x\in \R^{d}$. Moreover, %$(e^{-tH})_{t\ge 0}$ satisfies the following Gaussian estimate
\begin{equation*}
%\label{stime_nucleo}
|p_{hk}(t,x,y)|\le Ct^{-\frac{d}{2}}\exp\left(-\frac{|x-y|^2}{4\gamma t}\right), \quad h,k\in\{1,\ldots,m\},
\end{equation*}
for every $t>0$, almost every $(x,y)\in \R^{d}\times\R^d$, and some positive constants $C$ and $\gamma$ depending only on $\nu,\,d$ and $\max_{i,j\in\{1,\ldots,d\}}\|q_{ij}\|_\infty$.
\end{prop}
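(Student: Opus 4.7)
The strategy is to invoke the abstract domination result \cite[Theorem 2.30]{Ou05}, whose hypotheses are exactly the content of Lemma \ref{lem:dominazione}: the two domain properties (a) and (b) together with the form inequality
$\mathfrak{a}(\uu,|f|\,\operatorname{sign}(\uu)) \ge \mathfrak{b}(\|\uu\|,|f|)$.
Denote by $L_0$ the scalar self-adjoint realization of $-\operatorname{div}(Q\nabla)$ on $L^2(\R^d;\C)$ associated with the form $\mathfrak{b}$. The cited theorem then yields the pointwise domination
\begin{equation*}
\|(e^{-tL}\uu)(x)\| \le (e^{-tL_0}\|\uu\|)(x), \qquad \textit{for a.e. } x\in\R^d,
\end{equation*}
for every $t>0$ and every $\uu\in L^2(\R^d;\C^m)$.

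Next, since $Q$ is bounded, symmetric and uniformly elliptic, $L_0$ is a scalar second-order divergence-form operator with bounded measurable coefficients, and the classical Aronson estimates (see, e.g., \cite{Ou05}) produce an integral kernel $k_0$ for the semigroup $(e^{-tL_0})_{t\ge 0}$ satisfying
\begin{equation*}
0 \le k_0(t,x,y) \le Ct^{-d/2}\exp\left(-\frac{|x-y|^2}{4\gamma t}\right),
\end{equation*}
for almost every $(x,y)\in\R^d\times\R^d$, with positive constants $C,\gamma$ depending only on $d$, $\nu$ and $\max_{i,j}\|q_{ij}\|_\infty$.

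To extract the matrix kernel of $(e^{-tL})_{t\ge 0}$ and transfer the bound to its entries, fix $h,k\in\{1,\ldots,m\}$ and $\varphi\in L^1(\R^d;\C)\cap L^2(\R^d;\C)$. Applying the domination to $\uu=\varphi\bm{e}_k$, for which $\|\uu\|=|\varphi|$, gives
\begin{equation*}
|(e^{-tL}(\varphi\bm{e}_k))_h(x)| \le \|(e^{-tL}(\varphi\bm{e}_k))(x)\| \le \int_{\R^d}k_0(t,x,y)|\varphi(y)|dy.
\end{equation*}
Hence the map $\varphi\mapsto (e^{-tL}(\varphi\bm{e}_k))_h$ is bounded from $L^1(\R^d;\C)$ into $L^\infty(\R^d;\C)$ with norm at most $Ct^{-d/2}$, and by the Dunford-Pettis theorem it is represented by a kernel $p_{hk}(t,\cdot,\cdot)\in L^\infty(\R^d\times\R^d)$. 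Choosing $\varphi=\chi_F$ for a bounded measurable set $F\subset\R^d$ in the above inequality gives $|\int_F p_{hk}(t,x,y)dy|\le \int_F k_0(t,x,y)dy$; letting $F$ shrink to a point and invoking the Lebesgue differentiation theorem produce the entrywise bound $|p_{hk}(t,x,y)|\le k_0(t,x,y)$ for almost every $(x,y)\in\R^d\times\R^d$, from which the claimed Gaussian estimate follows.

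The main obstacle is the verification of the abstract domination hypotheses, which has already been carried out in Lemma \ref{lem:dominazione}; in particular, the crucial form inequality relies on the pointwise algebraic control $\langle Q\nabla\|\uu\|,\nabla\|\uu\|\rangle \le \sum_j\langle Q\nabla u_j,\nabla u_j\rangle$ combined with the nonnegativity \eqref{eq:accretivity:V} of $V$. Once the semigroup-level domination is in place, the remaining passage to a pointwise bound on the matrix kernel is routine, as outlined above.
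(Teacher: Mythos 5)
Your proof is correct and follows essentially the same route as the paper's: dominate $(e^{-tL})_{t\ge0}$ by the scalar semigroup $(e^{-tL_0})_{t\ge0}$ via Ouhabaz's Theorem~2.30 combined with Lemma~\ref{lem:dominazione}, invoke the scalar Gaussian/Aronson estimate for $L_0$, obtain the matrix-kernel entries by Dunford--Pettis, and then transfer the scalar bound pointwise to $p_{hk}$. The only cosmetic difference is in the last passage: the paper cites Meyer--Nieberg's kernel-domination theorems, whereas you argue directly with indicator test functions and Lebesgue differentiation (which needs the usual restriction to a countable family of balls so that the null set of $x$ does not depend on $F$); both versions of this final step are standard and equivalent.
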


\begin{proof}
Combining \cite[Theorem 2.30]{Ou05} and Lemma \ref{lem:dominazione}, we deduce that
\begin{equation}\label{domin-kernel}
\|(e^{-tL}\bm{u})(x)\|\le e^{-tL_0}\|\bm{u}(x)\|
\end{equation}
for every $\bm{u}\in L^2(\R^d;\C^m)$, every $t>0$ and almost every $x\in\R^d$, where $L_0$ is the sectorial operator associated in $L^2(\R^d;\C)$ to the form $\mathfrak{b}$ in Lemma \ref{lem:dominazione}. Moreover, it is known that, for every $t>0$ and every $f\in L^1(\R^d;\C)$, there exists a kernel $q(t,\cdot,\cdot)\in L^\infty(\R^d\times\R^d;\R)$ such that
\begin{equation*}
(e^{-tL_0}f)(x)=\int_{\R^d}q(t,x,y)f(y)dy    
\end{equation*}
for almost every $x\in\R^d$ and 
\begin{equation}\label{Gaussian-q}
0\le q(t,x,y)\le Ct^{-\frac{d}{2}}\exp\left(-\frac{|x-y|^2}{4\gamma t}\right)
\end{equation}
for almost every $(x,y)\in \R^d\times \R^d$ and some positive constants $C$ and $\gamma$ depending only on $\nu,\,d$ and $\max_{i,j\in\{1,\ldots,d\}}\|q_{ij}\|_\infty$ (see  \cite[Theorem 6.10]{Ou05}). In particular, the semigroup $(e^{-tL_0})_{t\ge 0}$ is ultracontractive and this implies that  
\begin{equation}\label{eq:e^-tH:L1:Linf}
\|(e^{-tL}\f)(x)\|\le (e^{-tL_0}\|\f\|)(x) \le \|e^{-tL_0}\|\f\|\|_\infty\le Mt^{-\frac d2}\|\f\|_{1}
\end{equation}
for every $\f\in C_c^\infty(\R^d;\C^m)$, some positive constant $M$ and almost every $x\in\R^d$. Thus, for every $t>0$, $e^{-tL}$ maps $L^1(\R^d;\C^m)$ into $L^\infty(\R^d;\C^m)$ and the action of this semigroup on $\bm{u}\in L^2(\R^d;\C^m)$ can be written as
\begin{equation*}
e^{-tL}\bm{u}=\sum_{h,k=1}^mT_{hk}(t)u_k{\bm e}_h,\qquad t>0,
\end{equation*}
where $T_{hk}=\langle e^{-tL}(\cdot\,{\bm e}_k),{\bm e}_h\rangle$ for every $h,k\in\{1,\dots,m\}$. Observe that, for almost every $x\in\R^d$ and every smooth function $g\in C_c^\infty(\R^d;\C)$, estimate \eqref{eq:e^-tH:L1:Linf} yields 
\begin{equation*}
|(T_{hk}(t)g)(x)| \le \|e^{-tL}(g{\bm e}_k)\|_{\infty} \le Mt^{-\frac d2}\|g\|_1.
\end{equation*}
So, by the Dunford-Pettis theorem, for every $h,k\in\{1,\ldots,m\}$ and every $t>0$ there exists a function $p_{hk}(t,\cdot ,\cdot)\in L^\infty(\R^d\times \R^d;\R)$ such that
\begin{equation*}
(T_{hk}(t)g)(x)=\int_{\R^d}p_{hk}(t,x,y)g(y)dy,\quad \textit{for a.e. } x\in \R^d,\;\,g\in L^1(\R^d;\C).  
\end{equation*}
Now, by \eqref{domin-kernel}, \eqref{Gaussian-q} and the proof of \cite[Theorems 3.3.4 \& 3.3.5]{Meyer-Nieberg}, we infer that
\begin{equation*}
|p_{hk}(t,x,y)|\le Ct^{-\frac{d}{2}}\exp\left(-\frac{|x-y|^2}{4\gamma t}\right), \qquad\;\, h,k\in\{1,\ldots,m\},
\end{equation*}
for every $t>0$ and almost every $(x,y)\in \R^d\times \R^d$.
\end{proof}

We have proved so far that $L$ satisfies Hypotheses \ref{hyp:operatore} of the previous section and we recover the following results. Here, $(p^k_{ij}(t,\cdot,\cdot))_{1\le i,j\le m}$ denotes the family of kernels associated to the operator $(-L)^ke^{-tL}$, for every $t>0$.

\begin{prop}
\label{prop:cplx_kernel_est}
For every $\beta\in\left(0,\frac\pi2\right)$, the following properties are satisfied.
\begin{enumerate}[\rm(i)]
\item There exists a positive constant $c_{\beta}$ such that, for every $i,j\in\{1,\ldots,m\}$ and every $z\in \Sigma_{\beta}$, 
\begin{align*}
%\label{stime_nucleo_cpl_finale_2}
|p_{ij}(z,x,y)|\leq c_{\beta}({\rm Re}(z))^{-\frac d2}\exp\left(-\frac{|x-y|^2}{32\gamma(1+\tan(\beta)){\rm Re}(z)}\sin\left(\frac\pi4-\frac\beta2\right)\right)
\end{align*}
for almost every $(x,y)\in\R^{d}\times\R^{d}$.
\item There exists a positive constant $c_{1,k,\beta}$ such that, for every $i,j\in\{1,\ldots,m\}$ and every $z\in \Sigma_\beta$,
\begin{align*}
%\label{stime_nucleo_cpl_H_finale}
|p^k_{ij}(z,x,y)|\leq c_{1,k,\beta}({\rm Re}(z))^{-\frac d2-k}\exp\left(-\frac{|x-y|^2}{64\gamma(1+\tan(\beta)){\rm Re}(z)}\sin\left(\frac\pi8-\frac\beta4\right)\right)
\end{align*}
for almost every $(x,y)\in\R^{d}\times\R^{d}$.
\end{enumerate}

\end{prop}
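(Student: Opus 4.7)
The plan is to observe that the proposition is essentially a direct specialization of the general results proved in Section 3 (Propositions \ref{prop:cplx_kernel_est_gen} and \ref{prop:stime:nucleo:He-tH}) to the operator $L$ associated with the form $\mathfrak{a}$. The key point is that all the hypotheses of Section 3 have already been verified for this $L$: the semigroup $(e^{-tL})_{t\ge0}$ has been shown to extend analytically to the full right half-plane (so one can take $\theta=\frac\pi2$ in Hypothesis \ref{hyp:operatore}(i)), the self-adjointness and injectivity of $L$ coming from the symmetry and accretivity of $\mathfrak{a}$ yield a bounded $\textup{H}^\infty$-calculus on $L^2(\R^d;\C^m)$ for every angle $\mu\in(0,\pi)$, and the domination argument combined with the Dunford--Pettis theorem provides the matrix kernel satisfying the pointwise Gaussian bound required by Hypothesis \ref{hyp:operatore}(iii).

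To prove item (i), I would simply apply Proposition \ref{prop:cplx_kernel_est_gen} with $\theta=\frac\pi2$. The only point to keep track of is the identification of the constants $c_1,c_2$ appearing in the Gaussian kernel $w$ from \eqref{eq:def:gaussian:fun} with those arising from the scalar semigroup bound \eqref{Gaussian-q}: specifically, the estimate for $p_{hk}$ proved in the previous proposition fits the template of $w$ with $c_1 = C$ and $c_2 = \frac{1}{4\gamma}$. Substituting these values into the conclusion of Proposition \ref{prop:cplx_kernel_est_gen}, the factor $\frac{c_2}{8(1+\tan(\beta))}$ becomes $\frac{1}{32\gamma(1+\tan(\beta))}$ and $\sin\!\bigl(\frac{\theta-\beta}{2}\bigr)$ becomes $\sin\!\bigl(\frac{\pi}{4}-\frac{\beta}{2}\bigr)$, which is precisely the claimed estimate.

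For item (ii), the argument is entirely analogous, invoking Proposition \ref{prop:stime:nucleo:He-tH} instead, again with $\theta=\frac\pi2$ and the same identification $c_2=\frac{1}{4\gamma}$. The factor $16(1+\tan(\beta))$ in the denominator becomes $64\gamma(1+\tan(\beta))$ after substitution, and the trigonometric factor $\sin\!\bigl(\frac{\theta-\beta}{4}\bigr)$ becomes $\sin\!\bigl(\frac{\pi}{8}-\frac{\beta}{4}\bigr)$.

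No real obstacle is expected: since all the heavy machinery (Phragmén--Lindelöf via \cite[Lemma 6.18]{Ou05}, Cauchy's formula for the derivatives, etc.) is already encapsulated in the two general propositions, the proof reduces to bookkeeping. The only subtlety worth emphasizing is that $\beta$ here ranges over the whole interval $(0,\frac\pi2)$, which is allowed precisely because $\theta=\frac\pi2$ is the maximal sector of analyticity available in the self-adjoint case; this is what makes the application of the general theorems clean and free of additional restrictions on $\beta$.
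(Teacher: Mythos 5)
Your proposal is correct and is exactly what the paper intends: Proposition 4.4 is stated without proof precisely because it is a direct specialization of Propositions \ref{prop:cplx_kernel_est_gen} and \ref{prop:stime:nucleo:He-tH} with $\theta=\frac{\pi}{2}$ and the identification $c_1=C$, $c_2=\frac{1}{4\gamma}$ in the Gaussian kernel \eqref{eq:def:gaussian:fun}. Your constant bookkeeping matches the stated bounds.
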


\begin{theo}
The operator $L$ admits a bounded $\textup{H}^\infty(\Sigma_\mu)$-calculus on $L^p(\R^d;\C^m)$ for every $p\in(1,\infty)$ and every $\mu\in\left(0,\pi\right)$.
\end{theo}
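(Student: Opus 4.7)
The strategy is to verify that the operator $L$ associated to the form $\mathfrak{a}$ fits into the framework of Hypotheses~\ref{hyp:operatore} and then to invoke Theorem~\ref{thm:Hinfinity_calculus_Lp:gen}. All of the analytic work has already been done in the two preceding results and in the discussion of the form $\mathfrak{a}$, so the proof amounts to collecting the pieces and choosing the parameters optimally.

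Concretely, I would proceed as follows. Since $\mathfrak{a}$ is densely defined, accretive, continuous, closed and \emph{symmetric}, the associated operator $L$ is self-adjoint and nonnegative on $L^2(\R^d;\C^m)$. This has two immediate consequences: first, the semigroup $(e^{-tL})_{t\ge 0}$ extends to a bounded analytic semigroup on the entire open right half-plane, so Hypothesis~\ref{hyp:operatore}(i) is satisfied with $\theta=\frac\pi2$ and $L$ is sectorial of angle~$0$; second, by the standard Hilbert space result \cite[Corollary~7.1.6]{Haase06}, already recalled in Section~2, $L$ admits a bounded $\textup{H}^\infty(\Sigma_\nu)$-calculus on $L^2(\R^d;\C^m)$ for every $\nu\in(0,\pi)$, so Hypothesis~\ref{hyp:operatore}(ii) holds as well. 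Hypothesis~\ref{hyp:operatore}(iii) is exactly the content of the preceding proposition, which produces the matrix kernel $(p_{hk}(t,\cdot,\cdot))_{1\le h,k\le m}$ together with a Gaussian pointwise bound matching the shape of \eqref{eq:def:gaussian:fun} (with the identifications $c_1=C$ and $c_2=(4\gamma)^{-1}$).

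With all three hypotheses verified, Theorem~\ref{thm:Hinfinity_calculus_Lp:gen} applies and yields the bounded $\textup{H}^\infty(\Sigma_\mu)$-calculus of $L$ on $L^p(\R^d;\C^m)$ for every $p\in(1,\infty)$ and every $\mu\in\bigl(\frac\pi2-\theta,\pi\bigr)$; the choice $\theta=\frac\pi2$ turns this range into the full interval $(0,\pi)$ announced in the statement. There is really no obstacle to overcome here, since the whole abstract machinery of Section~3 has been developed precisely to treat this example: the role of Hypotheses~\ref{hyp_0} is, on the one hand, to guarantee the self-adjointness needed to activate the $L^2$ calculus, and, on the other, to deliver the Gaussian kernel estimate needed to extrapolate that calculus to every $L^p$ with $p\in(1,\infty)$.
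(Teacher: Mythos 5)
Your proposal is correct and matches the paper's (implicit) argument: the paper itself leaves the theorem without a displayed proof precisely because Section~4 has already verified all three parts of Hypotheses~\ref{hyp:operatore} with $\theta=\frac\pi2$, so the statement is an immediate instance of Theorem~\ref{thm:Hinfinity_calculus_Lp:gen}. The only minor remark is that both you and the paper leave the injectivity of $L$ on $L^2(\R^d;\C^m)$ implicit; this is needed for the $\textup{H}^\infty$-calculus to be well defined, and it does hold here because $\mathfrak a(\uu,\uu)=0$ forces $\nabla u_i=0$ (by the ellipticity of $Q$ and the nonnegativity of $V$), so $\uu$ is constant and hence the zero function in $L^2(\R^d;\C^m)$.
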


Since the functions $\xi(z)=z^{is}$ belong to $\textup{H}^\infty(\Sigma_\mu)$ for every $s\in\R$ and every $\mu\in(0,\pi)$, we get the following result.

\begin{cor}
The operator $L$ has bounded imaginary powers in $L^p(\R^d;\C^m)$, i.e., $L^{is}\in \mathscr{L}(L^p(\R^d;\C^m))$ for every $s\in\R$ and there exist positive constants $c$ and $\delta$, which depend on $p$ and $\mu$, such that $\|L^{is}\|_{\mathscr L(L^p(\R^d;\C^m))}\leq ce^{\delta|s|}$ for every $s\in \R$.
\end{cor}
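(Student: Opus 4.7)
The plan is to deduce the bound directly from the previous Theorem applied to the specific holomorphic function $\xi_s(z) \coloneqq z^{is}$, $s\in\R$, which is the standard route from a bounded $\textup{H}^\infty$-calculus to the boundedness of imaginary powers. So I would first fix $p\in(1,\infty)$ and $\mu\in(0,\pi)$, and then reduce the problem to checking the membership $\xi_s\in\textup{H}^\infty(\Sigma_\mu)$ together with a sharp estimate of $\|\xi_s\|_{\infty,\mu}$.

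First I would verify that, for every $s\in\R$ and every $\mu\in(0,\pi)$, the function $\xi_s$ belongs to $\textup{H}^\infty(\Sigma_\mu)$ and control its sup-norm. Writing $z = \rho e^{i\varphi}\in\Sigma_\mu$ with $\rho>0$ and $|\varphi|<\mu$, and choosing the principal branch of the logarithm, one obtains $\xi_s(z) = e^{is\log z} = e^{is(\log\rho + i\varphi)} = e^{-s\varphi}e^{is\log\rho}$, so that $|\xi_s(z)| = e^{-s\varphi} \le e^{|s|\mu}$ uniformly on $\Sigma_\mu$; this yields $\|\xi_s\|_{\infty,\mu}\leq e^{|s|\mu}$.

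Second, I would invoke the previous Theorem, which provides a constant $c_{p,\mu}>0$ (depending only on $p$ and $\mu$) such that $\|\xi(L)\|_{\mathscr L(L^p(\R^d;\C^m))} \leq c_{p,\mu}\|\xi\|_{\infty,\mu}$ for every $\xi \in \textup{H}^\infty(\Sigma_\mu)$. Specializing to $\xi = \xi_s$ immediately gives $\|L^{is}\|_{\mathscr L(L^p(\R^d;\C^m))} \leq c_{p,\mu}\, e^{|s|\mu}$ for every $s\in\R$, which is the desired estimate with $c=c_{p,\mu}$ and $\delta=\mu$.

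There is no substantial obstacle here: the statement is essentially a specialization of the previous Theorem to the functions $z\mapsto z^{is}$. The only minor subtlety worth a sentence in the write-up is the identification of the operator $\xi_s(L)$ defined through the extended $\textup{H}^\infty$-functional calculus with the standard imaginary power $L^{is}$ of the sectorial, self-adjoint, nonnegative operator $L$; this identification holds on $L^2(\R^d;\C^m)$ by the spectral theorem (cf. \cite[Chapter 7]{Haase06}), and then propagates to $L^p(\R^d;\C^m)$ via the density and uniqueness arguments already used in the proofs of Proposition \ref{prop-3.3} and Theorem \ref{thm:Hinfinity_calculus_Lp:gen}.
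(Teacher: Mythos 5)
Your proposal is correct and follows exactly the same route as the paper: the paper derives the corollary directly from Theorem \ref{thm:Hinfinity_calculus_Lp:gen} after the one-line observation that $z\mapsto z^{is}\in\textup{H}^\infty(\Sigma_\mu)$, and your computation $\|z^{is}\|_{\infty,\mu}\le e^{|s|\mu}$ simply makes explicit the bound the paper leaves implicit. The concluding remark on identifying $\xi_s(L)$ with $L^{is}$ is a sensible clarification but not a deviation.
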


\section*{Acknowledgments}
This article is based upon work from the project ``Elliptic and parabolic problems, heat kernel estimates and spectral theory" CUP D53D23005580006, funded by European Union
Next Generation EU within the PRIN 2022 program (D.D. 104 - 02/02/2022 Ministero dell’Università e della Ricerca). 
The authors are also members of G.N.A.M.P.A. of the Italian Istituto Nazionale di Alta Matematica (I.N.d.A.M.).

\end{document}